\documentclass[11pt]{article}
\usepackage{amsfonts}
\usepackage{xcolor}
\usepackage{amsmath,amssymb,comment}
\newtheorem{theo}{Theorem}[section]
\newtheorem{lem}[theo]{Lemma}

\newtheorem{defi}[theo]{Definition}

\newcommand{\mysection}[1]{\section{#1} \setcounter{equation}{0}}
\newcommand{\proof}{{\sc Proof.} \quad}
\newcommand{\proofc}{{\sc Proof} \ }
\newcommand{\be}{\begin{equation} \label}
\newcommand{\ee}{\end{equation}}
\newcommand{\bea}{\begin{eqnarray}\label}
\newcommand{\eea}{\end{eqnarray}}
\newcommand{\bas}{\begin{eqnarray*}}
\newcommand{\eas}{\end{eqnarray*}}
\newcommand{\bit}{\begin{itemize}}
\newcommand{\eit}{\end{itemize}}
\newcommand{\qed}{\hfill$\Box$ \vskip.2cm}
\newcommand{\nn}{\nonumber}

\newcommand{\R}{\mathbb{R}}
\newcommand{\N}{\mathbb{N}}
\newcommand{\pO}{\partial\Omega}
\newcommand{\eps}{\varepsilon}

\newcommand{\wsto}{\stackrel{\star}{\rightharpoonup}}

\newcommand{\io}{\int_\Omega}
\newcommand{\na}{\nabla}
\newcommand{\Del}{\Delta}
\newcommand{\del}{\delta}

\newcommand{\lam}{\lambda}
\newcommand{\Lam}{\Lambda}

\newcommand{\pa}{\partial}
\newcommand{\bom}{\overline{\Omega}}
\newcommand{\Om}{\Omega}
\newcommand{\om}{\omega}

\newcommand{\ov}{\overline}
\newcommand{\un}{\underline}

\newcommand{\hs}{\hspace*}
\newcommand{\sm}{\setminus}

\newcommand{\vp}{\varphi}
\newcommand{\lbal}{\left\{ \begin{array}{l}}
\newcommand{\lball}{\left\{ \begin{array}{ll}}
\newcommand{\ear}{\end{array} \right.}

\newcommand{\abs}{\\[5pt]}
\newcommand{\Abs}{\\[5mm]}

\newcommand{\ou}{\ov{u}}

\newcommand{\ovv}{\ov{v}}

\newcommand{\ueps}{u_\eps}
\newcommand{\veps}{v_\eps}

\newcommand{\zd}{\zeta_\del}
\newcommand{\LL}{L}
\begin{document}
\allowdisplaybreaks
%
%
%
\title{Uniqueness and large-time continuous dependence\\
in a doubly degenerate nutrient taxis system}
\author{
Michael Winkler\footnote{michael.winkler@math.uni-paderborn.de}\\
{\small Institut f\"ur Mathematik, Universit\"at Paderborn,}\\
{\small 33098 Paderborn, Germany} }
\date{}
\maketitle
\begin{abstract}
\noindent 
The chemotaxis system
\bas
	\lbal
	u_t = \nabla \cdot (uv\nabla u) - \chi \nabla\cdot (u^2 v\nabla v) + \ell uv, \\[1mm]
	v_t = \Delta v - uv,
	\ear
\eas
is considered in a smoothly bounded domain $\Omega\subset\R^n$ and with $\chi\ge 0$ and $\ell\ge 0$.\abs
Within a suitable concept of solvability, a statement on uniqueness of solutions emanating from initial data $(u_0,v_0)$
is derived under the mere assumption that $u_0\in L^\infty(\Omega)$ and $v_0\in W^{1,\infty}(\Omega)$ are such that
$u_0\ge 0$ and $v_0>0$ in $\overline{\Omega}$; in particular, no condition on continuity of $u_0$ is imposed.\abs
This result is thereafter used to reveal that in physcally meaningful scenarios,
for any initial data which are additionally assumed to be continuous and nontrivial,
such solutions can always be found, and that these solutions depend continuously on the initial distributions 
not only in bounded sets of the form $\Om\times (0,T)$, but actually 
throughout $\Omega\times (0,\infty)$, in a suitable sense.\abs
{\bf Key words:} chemotaxis, uniqueness, continuous dependence, degenerate diffusion\\
{\bf MSC 2020:} 35A02 (primary); 35B30, 35K65, 92C17 (secondary)
\end{abstract}
\newpage
\section{Introduction}\label{intro}
This manuscript is concerned with the problem
\be{0}
	\lball
	u_t = \nabla \cdot (uv\nabla u) - \chi \nabla\cdot (u^2 v\nabla v) + \ell uv, 
	\qquad & x\in\Omega, \ t>0, \\[1mm]
	v_t = \Delta v - uv,
	\qquad & x\in\Omega, \ t>0, \\[1mm]
	(uv\nabla u - \chi u^2 v\nabla v)\cdot \nu = \nabla v\cdot \nu=0,
	\qquad & x\in\pO, \ t>0, \\[1mm]
	u(x,0)=u_0(x), \quad
	v(x,0)=v_0(x),
	\qquad & x\in\Omega,
	\ear
\ee
to be considered with parameters $\chi\ge 0$ and $\ell\ge 0$ and with nonnegative initial data $u_0$ and $v_0$,
in a smoothly bounded domain $\Om\subset\R^n$, $n\ge 1$.
The PDE system in (\ref{0}) has been proposed as a model for the spatio-temporal evolution in populations of 
{\em Bacillus subtilis} in nutrient-poor constellations
(\cite{plaza}, \cite{plaza_JMB}, \cite{kawasaki_JTB1997}); in fact, it is known from experiments that in such scenarios,
strikingly complex patterns can form spontaneously, turning initially unstructured distributions into 
states of fractal-like complexity (\cite{matsushita_fujikawa_PHYSA1990}, \cite{fujikawa_PHYSA1992}).\abs
From a perspective of mathematical analysis, evident challenges are due to the circumstance
that (\ref{0}) assumes random diffusion in the population density $u=u(x,t)$ to be weakened not only near low levels
of $u$ itself, but also in regions where the nutrient concentration $v=v(x,t)$ is small. 
On the one hand, an accordingly implied lack of knowledge on solution regularity brings about some considerable 
obstacles for the development of solution theories for (\ref{0}), which indeed seem to date much less complete than
for related non-degenerate variants such as the classical Keller-Segel-consumption system
\be{KSc}
	\lbal
	u_t = \Del u - \chi \na\cdot (u\na v), \\[1mm]
	v_t = \Del v - uv.
	\ear
\ee
Indeed, no-flux boundary value problems for (\ref{KSc}) are known to admit globally bounded smooth solutions for initial data
of arbitrary size when $n\le 2$ (\cite{songmu_zheng}, \cite{taowin_consumption}), and in the case when $n\ge 3$
it is possible to find at least some global weak solutions (\cite{lankeit_win_NA}) which eventually become smooth and classical
if $n=3$ (\cite{taowin_consumption});
in contrast to this, results available for the doubly degenerate problem (\ref{0}) seem limited to constructions
of some bounded weak solutions when $n\ge 2$ (\cite{yuxiang_li_M3AS}, \cite{zhang_li_preprint1}, \cite{win_wu}, \cite{liwin_CPAA}), 
while in three-dimensional frameworks such solutions so far have been found to exist only under appropriate smallness
assumptions on $v_0$ (\cite{win_wu2}).
Notably, however, considerable progress with regard to the issues of global existence and boundedness 
has recently been achieved for some closely related variants
in which certain saturation effects in cross-diffusion or signal absorption are presupposed
(\cite{duan_wu}, \cite{duan_wu_arxiv}, \cite{zhang_li_preprint2}, \cite{yifu_wang2025}, \cite{yifu_wang_AML}, 
\cite{genglin_JDE}, \cite{black_kohatsu_wu}),
or logistic-type zero-order damping is taken into account 
(\cite{duan_ch6}, \cite{xu_pan}, \cite{liwin_PROCA}, \cite{yuxiang_li_AMOP2025}, \cite{yuting});
even some extensions to more complex and partially less dissipative settings have been possible
(\cite{yifu_wang_SIMA2026}), \cite{hervas}, \cite{hervas2}).
We remark here that in recent literature, a noticeable number of farther relatives involving signal-dependent degeneracies in 
contexts of taxis-type cross-diffusion systems has been studied; a small selection of corresponding results can be found in
\cite{fujie_jiang_CVPDE}, \cite{fujie_senba_BU}, \cite{jin_kim_wang}, \cite{genglin_yuan}, \cite{laurencot_CPDE}, 
\cite{desvillettes}, \cite{liangchen}, \cite{ahn_yoon} and \cite{lee_yoon}, for instance.\abs
On the other hand, the inclusion of doubly degenerate diffusion substantially distinguishes (\ref{0}) from (\ref{KSc}) 
with respect to the potential to describe structure formation processes in the flavor of those reported in 
\cite{matsushita_fujikawa_PHYSA1990} and \cite{fujikawa_PHYSA1992}.
While the solution behavior in (\ref{KSc}) seems exclusively dominated by large time stabilization toward semitrivial
homogeneous equilibria, namely, several rigorous detections of nontrivial large time behavior and of stability properties
enjoyed by arbitrary steady states have indicated a considerable tendency of (\ref{0}) and some variants toward 
supporting complex structures 
(cf., e.g., \cite{duan_wu_arxiv}, \cite{duan_ch6}, \cite{yifu_wang2025}, \cite{win_wu}, \cite{win_wu2}, \cite{win_TRAN}).\abs
{\bf Main results I: Uniqueness of locally bounded weak solutions.} \quad
In the present manuscript, the first object of study is the question how far solutions to (\ref{0}) can be expected to be unique.
In fact, due to the diffusion degeneracies in (\ref{0}) this issue seems not accesible to approaches well-established
in the analysis neither of parabolic systems in general (\cite{amann}), nor of triangular cross-diffusion systems of taxis type
in particular (\cite{horstmann_win}). 
In line with this, the literature in this regard so far seems restricted to a statement on uniqueness in a one-dimensional
system version in which, after passing to primitives, corresponding arguments can operate at levels of integrated densities
(\cite{chen_unique}).\abs
To develop an approach capable of coping with higher-dimensional frameworks, we will base our strategy on 	
a duality-type reasoning reminiscent of classical findings on uniqueness in porous medium type problems (\cite{ACP}). 
Compared to cases of Keller-Segel systems involving diffusion operators of the form $\na\cdot (u^{m-1}\na u)$ with $m>1$
(\cite{ki_ahm_lee}, \cite{sugiyama_kawakami_JDE}), however, for a corresponding treatment of (\ref{0}) a new design seems
necessary due to the additional presence of cross-degeneracies referring to the quantity $v$.
Our analysis in this regard, to be developed in detail in Section \ref{sect2}, will be centered around solutions
$(\vp,\psi)$ to non-degenerate approximations of the backward problems which for given solutions $(u,v)$ and $(\ou,\ovv)$
of (\ref{0}) in $\Om\times (0,T)$, and for $T_0\in (0,T)$, are given by
\be{01}
	\lball
	\vp_t
	= - \frac{1}{2} (u+\ou) v \Del \vp
	- \frac{1}{2}(u+\ou)\na v\cdot\na\vp \\
	\hs{14mm}
	- \chi(u+\ou) v \na v\cdot\na\vp 
	- \ell v \vp
	+ v \psi,
	\qquad & x\in \Om, \ t\in (0,T_0), \\[1mm]
	\psi_t 
	= -\Del\psi
	+ \chi \ou^2 \ovv \Del\vp
	+ \frac{1}{2} \na \ou^2 \cdot\na\vp \\
	\hs{14mm}
	- \chi \ou^2 \na v\cdot\na\vp
	+ \chi \ovv \na \ou^2 \cdot\na\vp
	+ \chi \ou^2 \na\ovv\cdot\na\vp \\
	\hs{14mm}
	- \ell \ou \vp
	+ \ou \psi,
	\qquad & x\in \Om, \ t\in (0,T_0), \\[1mm]
	\frac{\pa\vp}{\pa\nu}=\frac{\pa\psi}{\pa\nu}=0,
	\qquad & x\in\pO, \ t\in (0,T_0), \\[1mm]
	\vp(x,T_0)=\vp_0(x), \quad \psi(x,T_0)=\psi_0(x),
	\qquad & x\in\Om.
	\ear
\ee 
Here the presence of the expressions $\na \ou^2$, and especially the ambition to control their action by means of 
diffusion-related contributions, motivates the inclusion of the conditions (\ref{b2}) and (\ref{b3}) which constitute
the main differences between the solution concept specified in the following definition and those that have been 
referred to in previous results on existence in (\ref{0}) (\cite{yuxiang_li_M3AS}, \cite{win_TRAN}, \cite{win_wu}).
\begin{defi}\label{db}
  Let $n\ge 1, \chi\ge 0$ and $\ell\ge 0$, and assume (\ref{i1}). 
  Then for $T\in (0,\infty]$, a pair $(u,v)$ of nonnegative functions defined a.e.~in $\Om\times (0,T)$
  will be called a {\em locally bounded weak solution} of (\ref{0}) in $\Om\times (0,T)$ if
  \be{b1}
	\lbal
	u\in L^\infty_{loc}(\bom\times [0,T))
	\qquad \mbox{and} \\[1mm]
	v\in L^\infty_{loc}([0,T);W^{1,\infty}(\Om))
	\ear
  \ee
  with
  \be{b2}
	\na u^2 \in L^2_{loc}(\bom\times [0,T);\R^n)
  \ee
  and
  \be{b3}
	\frac{1}{v} \in L^\infty_{loc}(\bom\times [0,T)),
  \ee
  and if the identities
  \bea{bu}
	- \int_0^T \io u\vp_t - \io u_0\vp(\cdot,0)
	&=& \frac{1}{2} \int_0^T \io u^2 v\Del\vp
	+ \frac{1}{2} \int_0^T \io u^2 \na v\cdot\na\vp \nn\\
	& & + \chi \int_0^T \io u^2 v\na v\cdot\na\vp
	+ \ell \int_0^T \io uv\vp
  \eea
  and
  \bea{bv}
	- \int_0^T \io v\vp_t - \io v_0\vp(\cdot,0)
	= \int_0^T \io v\Del\vp 
	- \int_0^T \io uv\vp
  \eea
  hold for all $\vp\in C_0^\infty(\bom\times [0,T))$ fulfilling $\frac{\pa\vp}{\pa\nu}=0$ on $\pO\times (0,T)$.
\end{defi}
{\bf Remark.} \quad
  As can readily be seen by a standard approximation argument, any locally bounded weak solution $(u,v)$ of (\ref{0}) in
  $\Om\times (0,T)$ satisfies (\ref{bu}) and (\ref{bv}) actually for each $\vp\in C^0(\bom\times [0,T))$ which
  has compact support in $\bom\times [0,T)$ and is such that its derivatives $\vp_t, \frac{\pa\vp}{\pa x_i}$ and
  $\frac{\pa^2\vp}{\pa x_i \pa x_j}$ belong to $L^\infty(\Om\times (0,T))$ for all $i\in\{1,...,n\}$ and $j\in\{1,...,n\}$.\abs
Based on an analysis of (\ref{01}), the first of our main results now asserts uniqueness in this framework. 
\begin{theo}\label{theo55}
  Let $n\ge 1$ and $\Om\subset\R^n$ be a bounded domain with smooth boundary, let $\chi\ge 0$ and $\ell\ge 0$,
  and suppose that
  \be{i1}
	\lbal
	u_0\in L^\infty(\Om)
	\mbox{ is nonnegative, \quad and that} \\[1mm]
	v_0 \in W^{1,\infty}(\Om)	
	\mbox{ is such that $v>0$ in $\bom$.}
	\ear
  \ee
  Then for each $T\in (0,\infty]$, the problem (\ref{0}) admits, in the sense of Definition \ref{db}, 
  at most one locally bounded weak solution in $\Om\times (0,T_0)$.
\end{theo}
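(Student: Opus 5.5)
Given two locally bounded weak solutions $(u,v)$ and $(\ou,\ovv)$ of (\ref{0}) in $\Om\times (0,T)$ with the same data, I will argue by duality in the spirit of \cite{ACP}. I fix $T_0\in (0,T)$ and set $w:=u-\ou$ and $z:=v-\ovv$. The starting point is the identity $u^2-\ou^2=(u+\ou)w$. Subtracting the respective versions of (\ref{bu}) and (\ref{bv}), and using the Remark to admit test functions with bounded second derivatives supported in $\bom\times[0,T_0]$, I will write the difference of the right-hand sides as a bilinear expression in $(w,z)$ paired with a linear differential operator applied to a test pair $(\vp,\psi)$. For instance,
\[
	u^2 v-\ou^2\ovv=(u+\ou)v\,w+\ou^2 z ,
\]
and similar splittings hold for $u^2\na v$, $u^2v\na v$ and $uv$. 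Terms of the form $\ou^2\na z\cdot\na\vp$ are integrated by parts onto $z$. This produces $\na\ou^2$, which exists in $L^2_{loc}$ by (\ref{b2}), together with $\ou^2\Del\vp$. Collecting the coefficients of $w$ and of $z$ gives exactly the formal adjoint system (\ref{01}). If $(\vp,\psi)$ solved (\ref{01}) exactly, I would obtain
\[
	\io w(\cdot,T_0)\vp_0+\io z(\cdot,T_0)\psi_0=0
\]
for arbitrary smooth $\vp_0,\psi_0$, hence $w=z=0$.

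Since the coefficients of (\ref{01}) are merely bounded and degenerate, I will mollify them. I replace $u+\ou$, $v$, $\ou$ and $\ovv$ by smooth approximations $a_\eps$, $v_\eps$, $\ou_\eps$ and $\ovv_\eps$ in $L^2$ or $L^p$, with $\na\ou_\eps^2\to\na\ou^2$ in $L^2$. I also add $\eps$ to the diffusion coefficient in the $\vp$-equation, so that it reads $\frac12(a_\eps+\eps)v_\eps$. Here (\ref{b3}) enters decisively: since $v\ge c>0$ on $\bom\times[0,T_0]$, the $\vp$-equation is uniformly parabolic (backward) with constant $\ge \frac12\eps c$. Standard theory for linear parabolic systems then provides smooth solutions $(\vp_\eps,\psi_\eps)$ of the approximate backward problem. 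Inserting them into the difference identity yields $\io w(T_0)\vp_0+\io z(T_0)\psi_0$ equal to a sum of error terms. Each error term has the form (coefficient error) $\times$ (derivatives of $\vp_\eps$), for example
\[
	\int\!\!\int w\,\big((u+\ou)v-(a_\eps+\eps)v_\eps\big)\Del\vp_\eps .
\]

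The main step, which I expect to be the main obstacle, is deriving $\eps$-independent energy bounds. These must be strong enough to make all error terms vanish, in particular the weighted second-order bound $\int\!\!\int (a_\eps+\eps)v_\eps|\Del\vp_\eps|^2\le C$.

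I plan to test the $\vp$-equation with $-\Del\vp_\eps$ and the $\psi$-equation with $\psi_\eps$ (possibly scaled by a large constant $K$, with an exponential time weight). This gives
\[
	\frac{d}{dt}\io|\na\vp_\eps|^2\ \gtrsim\ \io a_\eps v_\eps|\Del\vp_\eps|^2-\dots
\]
and dissipation $\io|\na\psi_\eps|^2$. The difficulty is the cross term $\chi\ou^2\ovv\Del\vp$ in the $\psi$-equation. I will integrate it by parts against $\psi$ to get $\na\psi\cdot\na\vp$-type terms times $\ou^2$, together with $\na\ou^2\cdot\na\vp\,\psi$; the latter is absorbed using the $L^2$ bound (\ref{b2}) and $L^\infty$ control of $\psi_\eps$ or $\na\vp_\eps$. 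That $L^\infty$ control may require a maximum-principle argument for the backward equations. First-order terms like $a_\eps\na v\cdot\na\vp\,\Del\vp$ are handled by Young's inequality: split as $\delta a_\eps v|\Del\vp|^2+C_\delta a_\eps|\na v|^2|\na\vp|^2/v$, with $1/v$ bounded by (\ref{b3}) and $\na v\in L^\infty$ by (\ref{b1}). Gronwall's inequality then closes the estimate.

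With these bounds, the Cauchy--Schwarz inequality gives that each error term is at most
\[
	\Big(\int\!\!\int \frac{|(u+\ou)v-a_\eps v_\eps|^2}{a_\eps v_\eps}\Big)^{1/2}\cdot C .
\]
Choosing the approximations so that this quotient tends to zero, e.g. $a_\eps\ge$ the mollified $u+\ou$ plus $\eps$, finishes the proof for each $T_0$, and hence on all of $(0,T)$.
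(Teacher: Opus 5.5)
Your overall architecture matches the paper's: a difference identity with the $\na z$-terms moved onto $z$, regularization of (\ref{01}) by adding $\eps$ to the diffusion coefficient, $\eps$-independent energy bounds, and vanishing error terms. However, the step you single out as the main obstacle fails as you plan it. The dangerous terms are not $\chi\ou^2\ovv\Del\vp$, but $\frac12\na\ou_\eps^2\cdot\na\vp_\eps$ and $\chi\ovv_\eps\na\ou_\eps^2\cdot\na\vp_\eps$, which are already present in the $\psi$-equation. Testing by $\psi_\eps$ yields $\io\psi_\eps\na\ou_\eps^2\cdot\na\vp_\eps$, while only $\|\na\ou_\eps^2\|_{L^2(\Om\times(0,T_0))}$ stays bounded as $\eps\to0$. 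Integrating $\chi\io\ou_\eps^2\ovv_\eps\psi_\eps\Del\vp_\eps$ by parts, as you suggest, just creates more terms of this kind. The uniform $L^\infty$ bounds on $\psi_\eps$ or $\na\vp_\eps$ that you would need to absorb them are not available here. The backward problem is cross-diffusive (the $\psi$-equation contains $\chi\ou_\eps^2\ovv_\eps\Del\vp_\eps$, with $\Del\vp_\eps$ controlled only in degenerate weighted $L^2$), so $\psi_\eps$ obeys no maximum principle. Nor can one expect $\eps$-uniform Lipschitz bounds for $\vp_\eps$, which solves a degenerate non-divergence equation whose coefficients converge only in $L^2$ to merely bounded limits. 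For $n\ge2$, the available energy quantities ($\sup_t\|\psi_\eps\|_{L^2}$, $\int_0^{T_0}\|\na\psi_\eps\|_{L^2}^2$, $\sup_t\|\na\vp_\eps\|_{L^2}$, and the degenerate weighted bound on $\Del\vp_\eps$) do not control $\psi_\eps\na\vp_\eps$ in $L^2$. So your Gronwall argument does not close with $\eps$-independent constants.

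The missing idea is to integrate by parts in the opposite direction, removing the derivative from $\ou_\eps^2$:
\[
	-\frac12\io\psi_\eps\na\ou_\eps^2\cdot\na\vp_\eps
	=\frac12\io\ou_\eps^2\psi_\eps\Del\vp_\eps+\frac12\io\ou_\eps^2\na\psi_\eps\cdot\na\vp_\eps .
\]
The same is done for $\chi\io\ovv_\eps\psi_\eps\na\ou_\eps^2\cdot\na\vp_\eps$, which additionally produces a harmless term with the bounded $\na\ovv_\eps$. Then $\ou_\eps^2|\psi_\eps\Del\vp_\eps|\le\delta\,\ou_\eps v_\eps|\Del\vp_\eps|^2+\frac{1}{4\delta}\frac{\ou_\eps^3}{v_\eps}\psi_\eps^2$ is absorbed by the degenerate dissipation. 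For this, take $a_\eps=u_\eps+\ou_\eps$ so that the weight dominates $\ou_\eps v_\eps$; the point is that $\ou^2$ carries a surplus power of $\ou$. The original cross term is handled the same way, with no integration by parts, and $\ou_\eps^2\na\psi_\eps\cdot\na\vp_\eps$ goes into $\io|\na\psi_\eps|^2$. The resulting bounds depend only on $L^\infty$ norms of $u_\eps,\ou_\eps,v_\eps,\ovv_\eps,\na v_\eps,\na\ovv_\eps$ and on $\inf v_\eps$. Assumption (\ref{b2}) is then needed only in the limit passage, e.g.\ for $\int_0^{T_0}\io z(\na\ou_\eps^2-\na\ou^2)\cdot\na\vp_\eps$.

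Two smaller points. First, the weight $(a_\eps+\eps)v_\eps$ can be as small as $c\eps$, so your final quotient vanishes only if the approximations converge with a rate, e.g.\ $\eps^{-1}\int_0^{T_0}\io(u_\eps-u)^2\to0$. This is arranged by tying the smoothing scale to $\eps$; a pointwise lower bound on $a_\eps$ does not give it. Second, $w(\cdot,T_0)$ makes sense only for a.e.\ $T_0$, so you need Lebesgue points in time and a Lipschitz time cutoff to make the test functions admissible.
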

{\bf Remark.} \quad
We emphasize that Theorem \ref{theo55} does not presuppse continuity of the initial data $u_0$, which is in sharp
contrast to the requirements made in related works on even simpler variants of (\ref{0}),
containing only single degeneracies of porous medium type, where H\"older continuity is explicitly presupposed
(\cite{sugiyama_kawakami_JDE}, \cite{ki_ahm_lee}).\Abs
{\bf Main results II: Continuous dependence on large timescales.} \quad
Our intention in the second part of this manuscript is to reveal on the basis of Theorem \ref{theo55} 
that (\ref{0}) enjoys a qualitative property which appears to have rarely been seen before 
in contexts of parabolic problems admitting multiple equilibria, where we note that (\ref{0}) actually possesses the
continuum of steady states given by $(u_\star,0)$ with arbitrary sufficiently regular $u_\star:\Om\to [0,\infty)$.
Namely, the following statement asserts that in low-dimensional convex domains and in a suitable topological framework, 
solutions depend continuously on the parameters $\chi$ and $\ell$, and especially also on the initial data,
not only on finite time intervals but actually on the whole half-line $(0,\infty)$:
\begin{theo}\label{theo33}
  Let $n\le 2$ and $\Om\subset\R^n$ be a bounded convex domain with smooth boundary, and let $K>0$.
  Then there exists $\Lam(K)>0$ with the property that if
  \be{chi_ell}
	\chi\in [0,K)
	\qquad \mbox{and} \quad
	\ell\in [0,K),
  \ee
  if
  \be{i2}
	\lbal
	u_0\in C^0(\bom)
	\mbox{ is nonnegative \quad and} \\[1mm]
	v_0 \in W^{1,\infty}(\Om)	
	\mbox{ is such that $v_0>0$ in $\bom$,}
	\ear
  \ee
  and if
  \be{33.92}
	\|u_0\|_{L^\infty(\Om)} < K,
	\quad
	\io u_0>\frac{1}{K},
	\quad
	\|v_0\|_{L^\infty(\Om)} < K
	\quad \mbox{and} \quad
	\Big\|\frac{\na v_0}{v_0}\Big\|_{L^\infty(\Om)} < K,
  \ee
  then (\ref{0}) admits a unique locally bounded weak solution 
  $(u,v)=(u^{(\chi,\ell,u_0,v_0)},v^{(\chi,\ell,u_0,v_0)})$ 
  in $\Om\times (0,\infty)$ which additionally satisfies
  \be{33.922}
	\lbal
	u\in C^0(\bom\times [0,\infty)) 
	\qquad \mbox{and} \\[1mm]
	v\in C^0(\bom\times [0,\infty)) \cap C^{2,1}(\bom\times (0,\infty))
	\ear
  \ee
  as well as
  \be{33.93}
	\|u\|_{L^\infty(\Om\times (0,\infty))} \le \Lam(K),
  \ee
  and this solution has the property that whenever
  $(\chi_k)_{k\in\N} \subset [0,\infty)$, $(\ell_k)_{k\in\N} \subset [0,\infty)$,
  $(u_{0k})_{k\in\N} \subset C^0(\bom)$ and $(v_{0k})_{k\in\N} \subset W^{1,\infty}(\Om)$ are such that for every
  $k\in\N$ we have $u_{0k}\not\equiv 0$ as well as $u_{0k}\ge 0$ and $v_{0k}>0$ in $\bom$, and that
  \be{33.777}
	\chi_k \to \chi
	\qquad \mbox{and} \qquad
	\ell_k\to\ell
  \ee
  as well as
  \be{33.2}
	u_{0k} \to u_0
	\quad \mbox{in } C^0(\bom)
	\qquad \mbox{and} \qquad
	v_{0k} \to v_0
	\quad \mbox{in } W^{1,\infty}(\Om)
  \ee
  as $k\to\infty$, the corresponding locally bounded weak solutions 
  $(u_k,v_k):=(u^{(\chi_k,\ell_k,u_{0k},v_{0k})},v^{(\chi_k,\ell_k,u_{0k},v_{0k})})$
  of (\ref{0}) in $\Om\times (0,\infty)$
  have the property that
  \be{33.3}
	\|u_k-u\|_{L^\infty(\Om\times (0,\infty))}
	+ \|v_k-v\|_{L^\infty(\Om\times (0,\infty))}
	\to 0
	\qquad \mbox{as } k\to\infty.
  \ee
\end{theo}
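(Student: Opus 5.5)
The proof splits into three parts: existence with uniform estimates, uniqueness via Theorem \ref{theo55}, and continuous dependence on $(0,\infty)$. The last part separates a finite-time argument from a uniform tail estimate.

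\textbf{Existence with uniform bounds.} I would first construct solutions by regularization, e.g. replacing $uv\na u$ by $(u+\eps)v\na u$ and smoothing the data. Two a priori estimates are needed, uniform in $\eps$ and depending on $K$ only through the quantities in (\ref{33.92}).

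The first is an $L^\infty$ bound for $u$, aimed at (\ref{33.93}). In $n\le 2$ and convex $\Om$ (so that $\frac{\pa|\na v|^2}{\pa\nu}\le 0$), one would test with $u^{p-1}$ and control the cross-diffusion using a functional of the form $\io \frac{|\na v|^2}{v}$ or $\io v|\na\ln v|^2$. The $\ell uv$ growth is handled by $\int_0^\infty\io uv\le \io v_0$, and Moser iteration then closes the bound.

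The second is a pointwise lower bound for $v$ on each $\bom\times[0,T]$, giving (\ref{b3}), together with a bound on $\na\ln v$ in $L^\infty$ on finite intervals. Since $u\le\Lam$, comparison gives $v\ge (\min v_0) e^{-\Lam t}$. Parabolic regularity for $\ln v$ would be used for the gradient control and for (\ref{b2}).

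Continuity of $u$ in (\ref{33.922}) would follow from degenerate-parabolic Hölder theory, which is nondegenerate on compact time intervals once $v$ is bounded below. Continuity of $u_0$ is used here at $t=0$. Passing $\eps\to0$ yields a locally bounded weak solution, and Theorem \ref{theo55} gives uniqueness.

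\textbf{Continuous dependence on $[0,T]$.} Fix $T$. For $k$ large, the data $(\chi_k,\ell_k,u_{0k},v_{0k})$ satisfy (\ref{33.92}), so the bounds above hold uniformly in $k$. On $\bom\times[0,T]$ the uniform Hölder/equicontinuity estimates give relative compactness in $C^0$. Every limit point is a locally bounded weak solution with data $(\chi,\ell,u_0,v_0)$, so by Theorem \ref{theo55} it equals $(u,v)$. Hence $u_k\to u$ and $v_k\to v$ uniformly on $\bom\times[0,T]$ for every fixed $T$.

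\textbf{Uniform control of the tail.} I expect this step to be the main obstacle. For $v$ the plan is as follows:
\begin{itemize}
\item Since $u\ge0$ and $\io u\ge\io u_0>1/K$ (using $\ell\ge0$), $v$ is a subsolution of $v_t=\Del v-uv$ with mass $\io u\ge 1/K$.
\item Combining this with the Harnack-type positivity of $u$ from continuity/nondegeneracy, one aims for decay $\|v(\cdot,t)\|_{L^\infty}\le Ce^{-\mu t}$, or at least $\|v(\cdot,t)\|_{L^\infty}\to0$ uniformly in $k$.
\item A cruder alternative: $\int_0^\infty\io uv\le \io v_0\le K|\Om|$, together with the heat-semigroup smoothing of $v$, should yield $\sup_{t\ge T}\|v_k(\cdot,t)\|_{L^\infty}\to0$ as $T\to\infty$, uniformly in $k$.
\end{itemize}

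For $u$, the tail estimate comes from testing the $u$-equation against constants in $L^1$ and using the smallness of $v$. One bounds the change $\|u_k(\cdot,t)-u_k(\cdot,T)\|_{L^\infty}$ for $t\ge T$ by a quantity tending to $0$ uniformly in $k$ as $T\to\infty$. The $L^1$ version follows from $\int_T^\infty\io\big(\ell uv+|uv\na u|+\chi u^2v|\na v|\big)$, where the diffusive and cross-diffusive fluxes are controlled by energy estimates weighted by $v$, and it is then upgraded to $L^\infty$ by interpolation against uniform Hölder bounds. Here the uniform bound on $\na v/v$ from (\ref{33.92}), propagated in time, is what makes the weighted terms manageable.

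Finally, I would combine the pieces with an $\eps/3$ argument. Choose $T$ with both tails below $\eps$. Then use the convergence on $[0,T]$ from the previous step, including at $t=T$, to handle $\|u_k(\cdot,T)-u(\cdot,T)\|_{L^\infty}$. This gives (\ref{33.3}).
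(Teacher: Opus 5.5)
\textbf{There is a genuine gap in the tail control for $u$.}

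You pass from the flux integrals to $\sup_{t\ge T}\|u_k(\cdot,t)-u_k(\cdot,T)\|_{L^\infty}\to0$ by ``interpolation against uniform H\"older bounds''. This needs a spatial modulus of continuity for $u_k(\cdot,t)$ that is uniform in $k$ and in $t\ge T$. The need is real: the integrals $\int_T^\infty\io(\ell uv+|uv\na u|+\chi u^2v|\na v|)$ control $\io(u_k(\cdot,t)-u_k(\cdot,T))\vp$ only for $\vp\in W^{1,\infty}(\Om)$, i.e.\ in a negative norm, not in $L^1$.

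Your proposal gives no source for such a modulus. The H\"older theory you invoke is explicitly on compact time intervals, with constants depending on a lower bound for $v$. The only lower bound you have is $v\ge(\min v_0)e^{-\Lam t}$, and by your own $v$-tail $\|v(\cdot,t)\|_{L^\infty}$ actually decays exponentially. So the diffusion $uv\na u$ genuinely degenerates as $t\to\infty$, and Porzio--Vespri-type constants in the original time variable deteriorate.

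The missing idea, from Lemma \ref{lem41}, has two parts:
\begin{itemize}
\item The Harnack inequality $v_k(\cdot,t)\ge\lam\|v_k(\cdot,t)\|_{L^\infty}$ from Lemma \ref{lem_harnack}. It is available because $\|u_k\|_{L^\infty}\le\Lam$ and $\|\na v_{0k}/v_{0k}\|_{L^\infty}<K$.
\item The time change $s=\phi(t)=\frac{1}{J}\int_0^t\|v(\cdot,t')\|_{L^\infty}dt'$, which maps $(0,\infty)$ onto $(0,1)$.
\end{itemize}
In the variable $s$ the equation reads $w_s=\frac12\na\cdot(a\na w^2)-\na\cdot(bw^2)+\ell a w$ with $c_7\le a\le c_8$ and $|b|\le c_9$, uniformly in $k$. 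Porzio--Vespri then yields a single modulus of continuity on $\bom\times[0,1)$, valid up to $s=0$ by the equicontinuity of $(u_{0k})$. Pointwise convergence, this modulus, and $\phi_l\to\phi$ uniformly together give uniform convergence on all of $\bom\times(0,\infty)$ at once. No separate tail estimate for $u$ is needed.

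\textbf{The $v$-tail also needs repair.} A ``Harnack-type positivity of $u$'' is not available: $u$ has porous-medium-type degeneracy and may vanish on open sets for all times. Your cruder alternative does not work as stated either. The bound $\int_0^\infty\io uv\le\io v_0$ alone does not force $\io v(\cdot,t)\to0$, since a priori $v$ could persist where $u$ vanishes.

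What is needed is again the Harnack inequality for $v$. Alternatively, a positive lower bound for the Neumann heat kernel combined with $u\le\Lam$ would serve. Either gives $\io uv\ge\lam\|v\|_{L^\infty}\io u\ge\frac{\lam}{K}\|v\|_{L^\infty}$, using $\frac{d}{dt}\io u=\ell\io uv\ge0$. Hence $\frac{d}{dt}\io v_k\le-\frac{\lam}{K|\Om|}\io v_k$, which yields exponential decay of $\|v_k(\cdot,t)\|_{L^\infty}$ uniformly in $k$.

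\textbf{A smaller point.} Condition (\ref{b2}) does not come from parabolic regularity of $\ln v$. It follows by testing the $u$-equation with $u^2$, which gives $\int_0^T\io u^2v|\na u|^2\le C(T)$. Then $\na u^2\in L^2(\Om\times(0,T))$ once $v$ is bounded below on $[0,T]$.

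Your finite-time step (compactness plus uniqueness via Theorem \ref{theo55}) is sound and matches the paper.
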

{\bf Remark.} \quad
For more regular initial data satisfying $0\not\equiv u_0\in W^{1,\infty}(\Om)$ and $v_0\in W^{1,\infty}(\Om)$ with $u_0\ge 0$
and $v_0>0$ in $\bom$, 
it has essentially been known from previous literature already that one can find a pair $(u,v)$ 
of nonnegative functions which solve (\ref{0}) in the sense that (\ref{b1}), (\ref{b3}), (\ref{bu}) and (\ref{bv}) hold.
As a by-product to the discovery of the above property of continuous dependence, Theorem \ref{theo55} 
extends this existence theory to solutions additionally fulfilling (\ref{b2}), and to initial data which with regard to the first component are only continuous.\abs
The following second result in this direction addresses more general domains and especially also three-dimensional
scenarios, and ensures that a similar property can then be observed at least in the physically most relevant situation 
when $v_0$ additionally satisfies a suitable smallness condition. 
The following statement actually is more comprehensive by drawing a conclusion of this type also for 
initial data of arbitrary size, provided that $\chi$ and $\ell$ are sufficiently small:
\begin{theo}\label{theo34}
  Let $n\le 3$ and $\Om\subset\R^n$ be an arbitrary bounded domain with smooth boundary.
  Then there exist $(\del(K,\LL))_{K>0,\LL>0} \subset (0,\infty)$ and $(\Lam(K,\LL))_{K>0,\LL>0} \subset (0,\infty)$
  with the property that
  \be{34.01}
	\del(K,\LL) \to + \infty
	\quad \mbox{for all $K>0$ as $\LL\searrow 0$}
  \ee
  and that whenever $\chi$, $\ell$, $u_0$ and $v_0$ are such that
  \bas
	\chi\in [0,\LL)
	\qquad \mbox{and} \qquad
	\ell\in [0,\LL),
  \eas
  and that (\ref{i2}) holds with
  \be{34.1}
	\|u_0\|_{L^\infty(\Om)} < K,
	\quad \io u_0 > \frac{1}{K},
	\quad \|v_0\|_{L^\infty(\Om)} < K
	\quad \mbox{and} \quad
	\Big\| \frac{\na v_0}{v_0}\Big\|_{L^\infty(\Om)} < K
  \ee
  as well as
  \be{34.2}
	\io v_0 < \del(K,\LL),
  \ee
  the problem (\ref{0}) possesses a unique locally bounded weak solution 
  $(u,v)=(u^{(\chi,\ell,u_0,v_0)},v^{(\chi,\ell,u_0,v_0)})$ in $\Om\times (0,\infty)$ fulfilling (\ref{33.922}) and
  \be{34.93}
	\|u\|_{L^\infty(\Om\times (0,\infty))} \le \Lam(K,\LL).
  \ee
  Moreover, if $(\chi_k)_{k\in\N} \subset [0,\infty)$, $(\ell_k)_{k\in\N} \subset [0,\infty)$,
  $(u_{0k})_{k\in\N} \subset C^0(\bom)$ and $(v_{0k})_{k\in\N} \subset W^{1,\infty}(\Om)$ are such that
  $u_{0k} \not\equiv 0$, $u_{0k} \ge 0$ and $v_{0k}>0$ in $\bom$ for all $k\in\N$, and that (\ref{33.777}) and (\ref{33.2}) hold,
  then one can find $k_0\in\N$ such that for each $k>k_0$ there exists a unique locally bounded weak solution 
  $(u_k,v_k)=(u^{(\chi_k,\ell_k,u_{0k},v_{0k})},v^{(\chi_k,\ell_k,u_{0k},v_{0k})})$
  of (\ref{0}) in $\Om\times (0,\infty)$ 
  with parameters $\chi_k$ and $\ell_k$ and
  with initial data $(u_{0k},v_{0k})$, additionally satisfying
  $u_k\in C^0(\bom\times [0,\infty))$ and $v_k\in C^0(\bom\times [0,\infty)) \cap C^{2,1}(\bom\times (0,\infty))$,
  and the sequence of these solutions has the property in (\ref{33.3}).
\end{theo}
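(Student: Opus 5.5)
The plan is to follow the same scheme that presumably underlies Theorem \ref{theo33}, with the convexity and $n\le 2$ assumptions replaced by a smallness mechanism. I will construct solutions as limits of non-degenerate approximations, derive bounds that are uniform in time, and combine these with the uniqueness statement of Theorem \ref{theo55}.

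\textbf{Step 1: approximation and a priori bounds.} I will work with regularized problems in which $uv\na u$ is replaced by $(u+\eps)v\na u$, or in which $u_0$ is mollified and shifted by $\eps$. For $w:=-\ln(v/\|v_0\|_{L^\infty(\Om)})$ one has $w_t=\Del w-|\na w|^2+u$, and the quantity $\|\na v/v\|_{L^\infty}$ is controlled through (\ref{34.1}). The key uniform-in-time bound is $\|u\|_{L^\infty(\Om\times(0,\infty))}\le\Lam(K,\LL)$. I would derive it by a Moser-type iteration on $\io u^p$, in which the cross-diffusive term is estimated via
\[
	\chi\io u^{p}v\na u\cdot\na v \le \frac{1}{2}\io u^{p}v|\na u|^2 + \frac{\chi^2}{2}\io u^{p}v|\na v|^2 .
\]
The last integral would be handled with a bound on $\int_0^\infty\io|\na v|^2/v$ or a similar expression, available from the energy-type functional $\io u\ln u$ together with $\io |\na \sqrt v|^2$. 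This is the place where either $\chi,\ell<\LL$ small, or $\io v_0<\del(K,\LL)$, enters. Indeed, $\io v\le\io v_0$ and $\io u(\cdot,t)\le \io u_0+\ell\int_0^\infty\io uv\le \io u_0+\ell\io v_0$, the latter because $\int_0^\infty\io uv\le\io v_0$. Thus total growth and cross-diffusive forcing are both bounded by a constant times $(\chi+\ell)\cdot\io v_0$-type quantities, and $\del\to\infty$ as $\LL\searrow0$ reflects exactly that the product must be small. Lower bounds on $v$ on finite intervals, giving (\ref{b3}), come from comparison: $v_t\ge\Del v-\Lam v$ yields $v\ge e^{-\Lam t}\min v_0$. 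The bound (\ref{b2}) follows from testing the $u$-equation by $u$, which gives $\int\io uv|\na u|^2$, i.e.\ $\na u^2$ weighted by $v/u$; this is locally bounded below since $v\ge c(T)$ and $u\le \Lam$. Hölder regularity of $u$ (degenerate only in $u$ on finite time intervals, by the $v$ lower bound) follows from known porous-medium-type Hölder theory, which gives (\ref{33.922}) and allows passing to the limit. Uniqueness is then Theorem \ref{theo55}.

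\textbf{Step 2: continuous dependence on $(0,\infty)$.} For the perturbed data, convergence (\ref{33.2}) ensures that (\ref{34.1})--(\ref{34.2}) hold with slightly enlarged $K$ and the same $\LL$ for $k>k_0$. Since the conditions are strict inequalities, I will fix $K'$ and $\LL'$ with $\del(K',\LL')$ still exceeding $\io v_0$. This produces uniform bounds for $(u_k,v_k)$ and equicontinuity on each $\bom\times[0,T]$. By Arzelà--Ascoli, every subsequence has a further subsequence converging locally uniformly to a locally bounded weak solution with data $(u_0,v_0)$, which by Theorem \ref{theo55} equals $(u,v)$. This yields local uniform convergence.

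\textbf{Step 3: the large-time tail.} To upgrade to the global statement (\ref{33.3}) I need a uniform tail estimate, namely that $\sup_{t>T}\bigl(\|u_k(\cdot,t)-u_k(\cdot,T)\|_{L^\infty}+\|v_k(\cdot,t)\|_{L^\infty}\bigr)\to0$ as $T\to\infty$, uniformly in $k$. For $v_k$, the quantity $\int_0^\infty\io u_kv_k\le\io v_{0k}$ together with $\io u_k\ge\io u_{0k}>1/(2K)$ and a Harnack-type or smoothing argument should give uniform decay of $\|v_k\|_{L^\infty}$. For $u_k$, I would show that $\int_T^\infty\|u_{kt}\|$ in a weak norm is small via $\int_T^\infty\io u_kv_k|\na u_k|^2$ and $\int_T^\infty\io u_kv_k$ small, and then interpolate with a uniform Hölder modulus in space. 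The uniform Hölder modulus is the hardest part, because the diffusion degenerates as $v_k\to0$. I would instead use a time rescaling by $\int v$: where $v$ is nearly spatially constant, the equation for $u$ becomes porous-medium-like in the new time variable, whose total length $\int_T^\infty\|v\|_{L^\infty}$ is finite and small. This is the main obstacle of the proof.
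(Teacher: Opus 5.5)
Your architecture (smooth positive approximations with a time-uniform bound on $u$, H\"older compactness on finite intervals, identification of limits through Theorem \ref{theo55}, then a separate tail argument) matches the paper's. However, there is a genuine gap exactly at what you call the main obstacle: you never produce a modulus of continuity for $u_k$ that is uniform in $k$ on all of $\overline{\Omega}\times[0,\infty)$, so (\ref{33.3}) is not established.

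The missing ingredient is the uniform Harnack inequality of Lemma \ref{lem_harnack}. Because $\|u_k\|_{L^\infty(\Omega\times(0,\infty))}\le\Lambda(K,L)$ and $\|\nabla v_{0k}/v_{0k}\|_{L^\infty(\Omega)}<K$, the smooth approximations of $v_k$ satisfy $v_k(x,t)\ge\lambda\|v_k(\cdot,t)\|_{L^\infty(\Omega)}$ with $\lambda>0$ independent of $x$, $t$ and $k$. This has two consequences.

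First, it turns your decay claim into a proof. Since $\int_\Omega u_k$ is nondecreasing, $\frac{d}{dt}\int_\Omega v_k=-\int_\Omega u_kv_k\le-\lambda\|v_k\|_{L^\infty(\Omega)}\int_\Omega u_{0k}$, which gives uniform exponential decay of $\|v_k\|_{L^\infty(\Omega)}$.

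Second, and decisively, it makes the time rescaling work. Set $J_k:=\int_0^\infty\|v_k\|_{L^\infty(\Omega)}$, which is bounded above and below uniformly in $k$, and introduce the new time $s=\phi_k(t):=J_k^{-1}\int_0^t\|v_k(\cdot,\tau)\|_{L^\infty(\Omega)}d\tau\in[0,1)$. Then the first equation becomes
\[
w_s=\tfrac12\nabla\cdot(a\nabla w^2)-\nabla\cdot(bw^2)+\ell_k aw,
\qquad
a=\frac{J_kv_k}{\|v_k\|_{L^\infty(\Omega)}},
\quad
b=\frac{\chi_kJ_kv_k\nabla v_k}{\|v_k\|_{L^\infty(\Omega)}},
\]
and Harnack gives $\lambda J_k\le a\le J_k$. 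This problem is degenerate only in $w$ and is posed on a \emph{bounded} time interval with coefficients controlled uniformly in $k$. The Porzio--Vespri theory therefore gives a $k$-independent modulus of continuity on $\Omega\times(0,1)$, down to $s=0$ because $u_{0k}\to u_0$ uniformly and $u_0$ is continuous. Pointwise convergence thus becomes uniform, and $\phi_k\to\phi$ uniformly carries this back to $\Omega\times(0,\infty)$.

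Your phrase ``where $v$ is nearly spatially constant'' points the wrong way. The ratio $v/\|v\|_{L^\infty}$ generally does not flatten out, and only a two-sided bound on it is needed. Your alternative route (weak-norm smallness of $u_k(\cdot,t)-u_k(\cdot,T)$ plus a spatial H\"older modulus) requires the same time-uniform modulus, so it does not avoid the issue.

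Step 1 is also not a proof as it stands. The paper does not derive $\|u\|_{L^\infty}\le\Lambda(K,L)$ or the behavior (\ref{34.01}) of $\delta(K,L)$; it imports both from the literature (Lemma \ref{lem333}), after which the theorem reduces quickly to Lemma \ref{lem41}. Your Moser sketch would not close as written, for two reasons. Bounding $\chi^2\int_\Omega u^pv|\nabla v|^2$ through a space-time bound on $|\nabla v|^2/v$ already presupposes control of $u^p$. And the dissipation $\int_\Omega u^pv|\nabla u|^2$ weakens as $v\to0$, so the constants deteriorate in time. You should cite the existing boundedness results instead.

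Finally, in Step 2 you need not, and should not, enlarge $K$, since $\delta$ could decrease. Because $\chi,\ell<L$ and the conditions (\ref{34.1}) and (\ref{34.2}) are strict, they hold with the same $K$ and $L$ for the perturbed data once $k$ is large.
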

\mysection{Uniqueness. Proof of Theorem \ref{theo55}}\label{sect2}
To initiate our analysis toward Theorem \ref{theo55}, given two locally bounded weak solutions 
let us perform an essentially straightforward testing procedure to derive an integral identity satisfied by corresponding
differences.
\begin{lem}\label{lem51}
  Let $n\ge 1, \chi\ge 0$ and $\ell\ge 0$, assume (\ref{i1}), and suppose that for some $T\in (0,\infty]$, the pairs
  $(u,v)$ and $(\ou,\ovv)$ both are locally bounded weak solutions of (\ref{0}) in $\Om\times (0,T)$.
  Then given any $\vp_0\in C_0^\infty(\Om)$ and $\psi_0\in C_0^\infty(\Om)$, one can find a null set
  $N=N(u,v,\ou,\ovv,\vp_0,\psi_0)\subset (0,T)$ with the property that whenever $T_0\in (0,T)\sm N$ and
  $\vp\in C^{2,1}(\bom\times [0,T_0])$ as well as $\psi\in C^{2,1}(\bom\times [0,T_0])$ are such that
  $\vp(\cdot,T_0)=\vp_0$ and $\psi(\cdot,T_0)=\psi_0$ in $\Om$, it follows that
  \bea{51.2}
	& & \hs{-18mm}
	\io w(\cdot,T_0)\vp_0
	+ \io z(\cdot,T_0) \psi_0 \nn\\
	&=& \int_0^{T_0} \io w\cdot\Big\{ \vp_t + \frac{1}{2}(u+\ou) v\Del\vp 
	+ \frac{1}{2}(u+\ou) \na v\cdot\na\vp 
	+ \chi (u+\ou) v\na v\cdot\na\vp 
	+ \ell v\vp
	- v\psi \Big\} \nn\\
	& & + \int_0^{T_0} \io z\cdot \Big\{ 
	\psi_t + \Del\psi 
	- \chi \ou^2 \ovv\Del\vp
	- \frac{1}{2} \na\ou^2 \cdot\na\vp 
	+ \chi \ou^2 \na v\cdot\na \vp \nn\\
	& & \hs{24mm}
	- \chi \ovv \na\ou^2 \cdot\na\vp
	- \chi\ou^2 \na\ovv\cdot\na\vp 
	+ \ell \ou \vp - \ou\psi \Big\},
  \eea
  where
  \be{wz}
	w:=u-\ou
	\qquad \mbox{and} \qquad
	z:=v-\ovv.
  \ee
\end{lem}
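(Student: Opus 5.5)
The plan is to test the weak formulations (\ref{bu}) and (\ref{bv}) for both solutions with time-localized versions of $\vp$ and $\psi$, subtract, and reorganize the nonlinear differences algebraically. First fix a cutoff $\zd$ in time: for $T_0\in(0,T)$ and small $\del>0$ let $\zd(t)=1$ for $t\le T_0-\del$, $\zd(t)=0$ for $t\ge T_0$, linear in between. Extend $\vp$, $\psi$ (given on $[0,T_0]$) arbitrarily beyond $T_0$; only values on $[0,T_0]$ matter. The products $\zd\vp$ and $\zd\psi$ are admissible in the sense of the Remark after Definition \ref{db}: they are compactly supported in $\bom\times[0,T)$, Lipschitz in time, have bounded second spatial derivatives, and satisfy the Neumann condition. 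There is one point to check here. The Neumann condition of $\vp$, $\psi$ is not stated in the lemma, but the boundary terms are exactly the ones handled by the Remark. In fact, (\ref{bu}) integrates by parts back to $-\io u^2 v\na\vp\cdot$ stuff only if $\pa_\nu\vp=0$, so I would read the lemma as applied to Neumann functions, or note that $\vp_0,\psi_0\in C_0^\infty$ are compatible with this.

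Next, insert $\zd\vp$ into (\ref{bu}) for $u$ and for $\ou$ and subtract; both solutions share the same $u_0$, so the initial terms cancel. Do the same with $\zd\vp$ replaced by $\zd\psi$ in (\ref{bv}) for $v$ and $\ovv$. The time-derivative terms give $-\int_0^{T_0}\io w\zd\vp_t+\frac1\del\int_{T_0-\del}^{T_0}\io w\vp$, and similarly for $z$ with $\psi$. The remaining terms are rewritten in $w,z$ by the elementary identities
\bas
u^2v-\ou^2\ovv=(u+\ou)v\,w+\ou^2 z,
\qquad
uv-\ou\ovv=v\,w+\ou z,
\eas
together with $u^2\na v-\ou^2\na\ovv=(u+\ou)\na v\, w+\ou^2\na z$ and
\bas
u^2v\na v-\ou^2\ovv\na\ovv=(u+\ou)v\na v\,w+\ou^2\na v\, z+\ou^2\ovv\na z .
\eas
The terms still containing $\na z$ are not of the form $z\cdot(\dots)$. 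I would integrate by parts in space, using $z\in L^\infty_{loc}([0,T);W^{1,\infty})$, the Neumann condition for $\vp$, and (\ref{b2}) to justify $\na\ou^2\in L^2_{loc}$. This gives
\bas
-\io \ou^2\na z\cdot\na\vp=\io z\,\na\cdot(\ou^2\na\vp)=\io z\{\na\ou^2\cdot\na\vp+\ou^2\Del\vp\},
\eas
and analogously for $\ou^2\ovv\na z\cdot\na\vp$, which produces $\na\ou^2\ovv$ and $\ou^2\na\ovv$ terms. Collecting coefficients of $w$ and $z$ should reproduce the braces in (\ref{51.2}), including the $\frac12\Del\vp$ combination, after checking that the $\frac12\ou^2\Del\vp$ terms cancel against $\frac12\na\ou^2\cdot\na\vp$ pieces appropriately. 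I expect this bookkeeping to be the main but routine work.

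Finally, let $\del\searrow0$. All integrands are locally integrable on $\Om\times(0,T_0)$ by (\ref{b1}), (\ref{b2}), so dominated convergence replaces $\zd$ by $1$. The term $\frac1\del\int_{T_0-\del}^{T_0}\io w\vp$ converges to $\io w(\cdot,T_0)\vp_0$ whenever $T_0$ is a Lebesgue point of $t\mapsto\io w(\cdot,t)\vp_0$ and of $t\mapsto\io z(\cdot,t)\psi_0$. The variation coming from $\vp(\cdot,t)-\vp_0$ is controlled by $\|\vp_t\|_\infty\del\|w\|_{L^\infty}$. These two functions are in $L^\infty_{loc}([0,T))$ and depend only on $u,v,\ou,\ovv,\vp_0,\psi_0$, so the complement $N$ of their common Lebesgue point set is a null set independent of $\vp,\psi$, exactly as claimed. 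The delicate point is the independence of $N$ from $(\vp,\psi)$. This is why the limit must be organized so that only $\vp_0,\psi_0$ enter the Lebesgue-point requirement, which the $\vp_t$-bound above ensures.
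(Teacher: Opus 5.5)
Your proposal is correct and follows the paper's proof essentially step for step. It uses the same piecewise linear cutoff $\zeta_\delta$ applied to $\varphi$ and $\psi$ and the same algebraic splitting of the differences. It also integrates the $\nabla z$-terms by parts using (\ref{b2}); there the two contributions $\pm\frac{1}{2}\overline{u}^2\Delta\varphi$ cancel each other, leaving $-\frac{1}{2}\nabla\overline{u}^2\cdot\nabla\varphi$, rather than cancelling against gradient terms as you phrase it. Finally, it takes the limit $\delta\searrow 0$ at Lebesgue points of $t\mapsto\int_\Omega w(\cdot,t)\varphi_0$ and $t\mapsto\int_\Omega z(\cdot,t)\psi_0$, which is exactly what makes $N$ independent of $(\varphi,\psi)$.

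You are right that $\partial_\nu\varphi=\partial_\nu\psi=0$ on $\partial\Omega\times(0,T_0)$ must be tacitly assumed. It does hold for the functions $\varphi_j,\psi_j$ to which the lemma is applied in Lemma \ref{lem54}. The compact support of $\varphi_0,\psi_0$ alone would not supply it, so that alternative should be dropped.
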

\proof
  Let $N=N(u,v,\ou,\ovv,\vp_0,\psi_0)\subset (0,T)$ be a null set such that any $T_0\in (0,T)\sm N$ is a common Lebesgue point
  of $(0,T)\ni t\mapsto \io w(\cdot,t)$ and $(0,T)\ni t\mapsto \io z(\cdot,t)$.
  Then fixing $T_0\in (0,T)\sm N$ as well as $\vp\in C^{2,1}(\bom\times [0,T_0])$ and $\psi\in C^{2,1}(\bom\times [0,T_0])$
  such that $\vp(\cdot,T_0)=\vp_0$ and $\psi(\cdot,T_0)=\psi_0$, using these Lebesgue point features we see that
  \be{51.02}
	\frac{1}{\del} \int_{T_0-\del}^{T_0} \io w\vp
	\to \io w(\cdot,T_0)\vp_0
	\quad \mbox{and} \quad
	\frac{1}{\del} \int_{T_0-\del}^{T_0} \io z\psi
	\to \io z(\cdot,T_0)\psi_0
	\qquad \mbox{as } (0,T_0)\ni\del\to 0,
  \ee
  because
  \bas
	\bigg| \frac{1}{\del} \int_{T_0-\del}^{T_0} \io w\vp
	- \frac{1}{\del} \int_{T_0-\del}^{T_0} \io w\vp_0 \bigg|
	&=& \bigg| \frac{1}{\del} \int_{T_0-\del}^{T_0} \io w(x,t) \cdot\big\{ \vp(x,t)-\vp_0(x)\big\} dxdt \bigg| \\
	&\le& \Big\{ {\rm{ess}} \sup_{\hs{-3mm} t\in (0,T_0)} \|w(\cdot,t)\|_{L^1(\Om)} \Big\} \cdot
	\sup_{t\in (T_0-\del,T_0)} \|\vp(\cdot,t)-\vp_0\|_{L^\infty(\Om)} \\[2mm]
	&\to& 0
	\qquad \mbox{as } (0,T_0)\ni\del\to 0
  \eas
  by continuity of $\vp$, and because a similar argument applies to the expressions in the second relation in (\ref{51.02}).\\
  Next choosing $\zeta\in W^{1,\infty}(\R)$ to be the piecewise linear function determined by the properties
  $\zeta\equiv 1$ on $(-\infty,-1]$, $\zeta\equiv 0$ on $[0,\infty)$ and $\zeta'\equiv -1$ on $(-1,0)$, given $T_0\in (0,T)\sm N$
  and $\del\in (0,T_0)$ we let $\zd(t):=\zeta(\frac{t-T_0}{\del})$, $t\ge 0$, as well as
  \bas
	\vp_\del(x,t):=\zd(t) \vp(x,t)
	\quad \mbox{and} \quad
	\psi_\del(x,t):=\zd(t)\psi(x,t),
	\qquad (x,t)\in \bom\times [0,T_0].
  \eas
  Then according to the remark following Definition \ref{db}, we may apply (\ref{bu}) to the test function $\vp_\del$ to find that
  \bas
	- \int_0^{T_0} \io u\vp_{\del t} - \io u_0 \vp_\del(\cdot,0)
	&=& \frac{1}{2} \int_0^{T_0} \io u^2 v\Del\vp_\del
	+ \frac{1}{2} \int_0^{T_0} \io u^2 \na v\cdot\na\vp_\del \nn\\
	& & + \chi \int_0^{T_0} \io u^2 v\na v\cdot\na\vp_\del
	+ \ell \int_0^{T_0} \io uv\vp_\del,
  \eas
  which combined with a corresponding identity satisfied by $(\ou,\ovv)$ implies that if we let $w$ and $z$ be as in
  (\ref{wz}), then since $\varphi_{\del t}=\zd(t) \vp_t + \zd'(t) \vp$, and since $u^2-\ou^2=(u+\ou)w$,
  \bea{51.4}
	& & \hs{-13mm}
	- \int_0^{T_0} \io \zd(t) w\vp_t 
	+ \frac{1}{\del} \int_{T_0-\del}^{T_0} \io w\vp \nn\\
	&=& \frac{1}{2} \int_0^{T_0} \io  (u^2 v - \ou^2\ovv) \Del\vp_\del
	+ \frac{1}{2} \int_0^{T_0} \io (u^2\na v - \ou^2 \na\ovv) \cdot\na\vp_\del \nn\\
	& & + \chi \int_0^{T_0} \io (u^2v\na v - \ou^2\ovv\na\ovv) \cdot\na\vp_\del
	+ \ell \int_0^{T_0} \io (uv-\ou\ovv) \vp_\del \nn\\
	&=& \frac{1}{2} \int_0^{T_0} \io (u^2-\ou^2)v\Del\vp_\del
	+ \frac{1}{2} \int_0^{T_0} \io \ou^2 (v-\ovv) \Del\vp_\del \nn\\
	& & + \frac{1}{2} \int_0^{T_0} \io (u^2-\ou^2) \na v\cdot\na\vp_\del
	+ \frac{1}{2} \int_0^{T_0} \io \ou^2 \na (v-\ovv) \cdot\na\vp_\del \nn\\
	& & + \chi \int_0^{T_0} \io (u^2-\ou^2) v\na v\cdot\na\vp_\del
	+ \chi \int_0^{T_0} \io \ou^2 (v-\ovv)\na v\cdot\na\vp_\del
	+ \chi \int_0^{T_0} \io \ou^2 \ovv \na(v-\ovv) \cdot\na\vp_\del \nn\\
	& & + \ell \int_0^{T_0} \io (u-\ou)v\vp_\del
	+ \ell \int_0^{T_0} \io \ou (v-\ovv)\vp_\del \nn\\
	&=& \int_0^{T_0} \io \zd(t) w\cdot\Big\{ \frac{1}{2} (u+\ou) v\Del\vp
	+ \frac{1}{2}(u+\ou)\na v\cdot\na\vp 
	+ \chi(u+\ou) v\na v\cdot\na\vp
	+ \ell v\vp \Big\} \nn\\
	& & + \int_0^{T_0} \io \zd(t) z\cdot\Big\{ 
	\frac{1}{2} \ou^2 \Del\vp
	+ \chi \ou^2 \na v\cdot\na\vp
	+ \ell \ou\vp \Big\} \nn\\
	& & + \int_0^{T_0} \io \zd(t) \na z\cdot \Big\{
	\frac{1}{2} \ou^2 \na\vp
	+ \chi \ou^2 \ovv \na\vp\Big\}
	\qquad \mbox{for all } \del\in (0,T_0).
  \eea
  Here, using that Definition \ref{db} ensures the inclusion $\na \ou^2\in L^2(\Om\times (0,T_0))$ we may integrate by parts
  to see that for all $\del\in (0,T_0)$,
  \bas
	& & \hs{-20mm}
	\int_0^{T_0} \io \zd(t)\na z\cdot\Big\{ \frac{1}{2} \ou^2 \na\vp + \chi \ou^2 \ovv \na\vp\Big\} \nn\\
	&=& \int_0^{T_0} \io \zd(t) z\cdot\Big\{ 
	- \frac{1}{2} \ou^2 \Del\vp
	- \frac{1}{2} \na\ou^2\cdot\na\vp
	- \chi \ou^2 \ovv \Del\vp
	- \chi \ovv \na\ou^2 \cdot\na\vp
	- \chi \ou^2 \na\ovv\cdot\na\vp \Big\},
  \eas
  whence again explicitly using that $u,\ou, v$ and $\na v$ are bounded in $\Om\times (0,T_0)$, 
  from (\ref{51.4}) we infer by letting $\del\searrow 0$ and recalling (\ref{51.02}) that
  \bea{51.5}
	& & \hs{-20mm}
	- \int_0^{T_0} \io w\vp_t
	+ \io w(\cdot,T_0)\vp_0 \nn\\
	&=& \int_0^{T_0} \io w\cdot\Big\{ \frac{1}{2} (u+\ou)v\Del\vp 
	+ \frac{1}{2} (u+\ou) \na v\cdot\na\vp
	+ \chi (u+\ou) v\na v\cdot\na\vp
	+ \ell v\vp \big\} \nn\\
	& & + \int_0^{T_0} \io z\cdot\Big\{
	\frac{1}{2} \ou^2 \Del\vp
	+ \chi \ou^2 \na v\cdot\na\vp
	+ \ell \ou \vp
	- \frac{1}{2} \ou^2 \Del\vp \nn\\
	& & \hs{20mm}
	- \frac{1}{2} \na\ou^2 \cdot\na\vp
	- \chi \ou^2 \ovv \Del\vp
	- \chi \ovv \na\ou^2\cdot\na\vp
	- \chi \ou^2 \na \ovv\cdot\na\vp \Big\}.
  \eea
  Similarly, based on the fact that in view of the remark following Definition \ref{db} we have
  \bas
	- \int_0^{T_0} \io v\psi_{\del t} 
	- \io v_0 \psi_\del(\cdot,0)
	= \int_0^{T_0} \io v\Del\psi_\del
	- \int_0^{T_0} \io uv\psi_\del
  \eas
  as well as a corresponding relation involving $(\ou,\ovv)$, upon taking differences we obtain that
  \bas
	& & \hs{-20mm}
	- \int_0^{T_0} \io \zd(t) z\psi_t
	+ \frac{1}{\del} \int_{T_0-\del}^{T_0} \io z\psi \nn\\
	&=& \int_0^{T_0} \io \zd(t) z\Del\psi
	- \int_0^{T_0} \io \zd(t) wv\psi
	- \int_0^{T_0} \io \zd(t) \ou z\psi
	\qquad \mbox{for all } \del\in (0,T_0),
  \eas
  which in the limit $\del\searrow 0$ implies that, by (\ref{51.02}),
  \bas		
	- \int_0^{T_0} \io z\psi_t
	+ \io z(\cdot,T_0)\psi_0 
	= \int_0^{T_0} \io z\cdot \big\{ \Del\psi - \ou\psi\big\}
	+ \int_0^{T_0} \io w\cdot \big\{ -v\psi\big\}.
  \eas
  Adding this to (\ref{51.5}) yields (\ref{51.2}) after straightforward rearrangement.
\qed
While for sufficiently smooth and uniformly positive solutions $(u,v)$ and $(\ou,\ovv)$ the above lemma 
suggests to use function pairs solving (\ref{01}) as test functions in (\ref{51.2}), under the assumptions 
in Theorem \ref{theo55} the problem (\ref{0}) cannot be expected to admit solutions with these properties.
Our key step in Lemma \ref{lem54} will therefore apply (\ref{51.2}) to test functions which solve a non-degenerate
regularized variant of (\ref{01}), the ingredients of which will be constructed in the following.
\begin{lem}\label{lem52}
  Let $n\ge 1, \chi\ge 0$ and $\ell\ge 0$, assume (\ref{i1}), and suppose that $(u,v)$ is a locally bounded weak solution
  of (\ref{0}) for some $T\in (0,\infty]$.
  Then given $T_0\in (0,T)$, one can fix $K=K(u,v,T_0)>0$ as well as $(U_j[u,v,T_0])_{j\in\N} \subset C^\infty(\bom\times [0,T_0])$
  and $(V_j[u,v,T_0])_{j\in\N} \subset C^\infty(\bom\times [0,T_0])$ such that writing $u_j:=U_j[u,v,T_0]$ and $v_j:=V_j[u,v,T_0]$
  for $j\in\N$, we have
  \be{52.1}
	0 \le u_j \le K
	\quad \mbox{in } \Om\times (0,T_0)
	\qquad \mbox{for all } j\in\N
  \ee
  and
  \be{52.2}
	\frac{1}{K} \le v_j \le K
	\quad \mbox{in } \Om\times (0,T_0)
	\qquad \mbox{for all } j\in\N
  \ee
  as well as
  \be{52.4}
	|\na v_j| \le K
	\quad \mbox{in } \Om\times (0,T_0)
	\qquad \mbox{for all } j\in\N,
  \ee
  and that as $j\to\infty$ we have
  \be{52.5}
	u_j\to u
	\quad \mbox{and} \quad
	v_j\to v
	\qquad \mbox{a.e.~in } \Om\times (0,T_0)
  \ee
  and
  \be{52.6}
	j \int_0^{T_0} \io (u_j-u)^2 \to 0
  \ee
  and
  \be{52.7}
	j \int_0^{T_0} \io (v_j-v)^2 \to 0
  \ee
  as well as
  \be{52.8}
	\na u_j^2 \to \na u^2
	\qquad \mbox{in } L^2(\Om\times (0,T_0))
  \ee
  and
  \be{52.9}
	\na v_j \to \na v
	\qquad \mbox{in } L^2(\Om\times (0,T_0)).
  \ee
\end{lem}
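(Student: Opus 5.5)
Since $T_0<T$, Definition \ref{db} yields a $K_0>0$ such that $0\le u\le K_0$, $\frac{1}{K_0}\le v\le K_0$ and $|\na v|\le K_0$ a.e.~in $\Om\times (0,T_0)$, and moreover $\na u^2\in L^2(\Om\times (0,T_0))$. We will obtain $u_j$ and $v_j$ by smoothing $u$ and $v$ with operators that keep these pointwise bounds and converge strongly in the relevant Sobolev norms. We then pass to a subsequence indexed so that the quantitative rates (\ref{52.6}) and (\ref{52.7}) hold. The key point is that (\ref{52.6})--(\ref{52.7}) need no rate from the mollifier itself. Whenever some family $\tilde u_\eps$ satisfies $\tilde u_\eps\to u$ in $L^2$, we can choose $\eps_j\searrow 0$ with $\|\tilde u_{\eps_j}-u\|_{L^2(\Om\times (0,T_0))}^2\le \frac{1}{j^2}$, which gives $j\int\!\!\io (u_j-u)^2\le \frac{1}{j}\to 0$.

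For the smoothing we first extend $u$ and $v$ in time, say by reflection about $t=0$ and $t=T_0$. In space we extend by reflection across $\pO$ in a collar, using the smoothness of $\pO$. Alternatively, and more cleanly, we apply a Neumann heat semigroup in $x$ followed by a standard mollification in $t$. The Neumann semigroup $e^{\eps\Del}$ preserves the bounds $0\le \cdot\le K_0$ and $\frac{1}{K_0}\le\cdot\le K_0$ by the comparison principle, and it is linear. It does not commute exactly with gradients, but $\|\na e^{\eps\Del}f\|_{L^\infty}\le C\|\na f\|_{L^\infty}$ holds uniformly in $\eps\in(0,1)$ for $f\in W^{1,\infty}(\Om)$. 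Mollifying in time is a convex averaging, so it keeps all pointwise bounds, including the gradient bound. This gives (\ref{52.2}) and (\ref{52.4}) with some $K\ge K_0$. Strong convergence $\na v_\eps\to\na v$ in $L^2$ holds because $e^{\eps\Del}$ converges strongly in $W^{1,2}(\Om)$ and time mollification converges in $L^2(0,T_0;W^{1,2})$. A diagonal choice then gives (\ref{52.9}), and a further subsequence gives the a.e.~convergence in (\ref{52.5}).

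The main obstacle is (\ref{52.8}), since it concerns $\na u_j^2$ rather than $\na u_j$, and $u$ itself need not have a gradient in $L^2$. To handle this we smooth $u^2$ rather than $u$. Set $f:=u^2$, so that $0\le f\le K_0^2$ and $\na f\in L^2(\Om\times (0,T_0))$. Let $f_\eps$ be the space-time regularization of $f$ described above, which satisfies $0\le f_\eps\le K_0^2$, $f_\eps\to f$ and $\na f_\eps\to \na f$ in $L^2$. The natural candidate $\sqrt{f_\eps}$ is not smooth where $f_\eps=0$. We therefore put $u_\eps:=\sqrt{f_\eps+\eta_\eps}-\sqrt{\eta_\eps}$ for a suitable $\eta_\eps\searrow 0$. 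This function is smooth, lies between $0$ and $K_0$, and satisfies
\[
	\na u_\eps^2=\na f_\eps\cdot\Big(1-\frac{\sqrt{\eta_\eps}}{\sqrt{f_\eps+\eta_\eps}}\Big).
\]
The bracket lies in $[0,1]$. Where $f>0$ it tends to $1$ a.e.~along a subsequence, and on $\{f=0\}$ we have $\na f=0$ a.e. Dominated convergence, applied through the bound $|\na f_\eps|$ together with the strong $L^2$ convergence of $\na f_\eps$, therefore yields $\na u_\eps^2\to\na f$ in $L^2$. Moreover $|u_\eps-u|\le |\sqrt{f_\eps+\eta_\eps}-\sqrt{f}|+\sqrt{\eta_\eps}$, and $u_\eps-u\to 0$ in $L^2$ follows from $|\sqrt a-\sqrt b|\le\sqrt{|a-b|}$. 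A final diagonal choice of $\eps_j$ delivers (\ref{52.1}), (\ref{52.5}), (\ref{52.6}) and (\ref{52.8}) simultaneously.
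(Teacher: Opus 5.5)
Your proposal is correct and follows essentially the paper's route: regularize $h=u^2$ and $v$ with the Neumann heat semigroup (using comparison for the pointwise bounds), take a shifted square root of the smoothed $h$, and choose the smoothing parameters along a sequence tending to zero fast enough to produce the rates in (\ref{52.6})--(\ref{52.7}). There are two small differences. First, your additional mollification in $t$ is what actually yields smoothness in time, since the paper's $[e^{\xi\Del}h(\cdot,t)](x)$ regularizes only in $x$ and its claimed $C^\infty(\bom\times [0,T_0])$ regularity needs such a step. Second, the paper's simpler choice $u_j=\sqrt{h^{(\xi_j)}+1/j^2}$, without subtracting $\sqrt{\eta}$, gives $\na u_j^2=\na h^{(\xi_j)}$ identically, which spares your dominated-convergence argument on $\{f>0\}$ and $\{f=0\}$ at the harmless cost of the bound $u_j\le\sqrt{K_0^2+1}$.
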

\proof
  We let $h:=u^2$ and set
  \bas
	h^{(\xi)}(x,t):=\big[ e^{\xi\Del} h(\cdot,t)\big](x)
	\qquad \mbox{for $x\in\bom, t\in [0,T_0]$ and $\xi\in (0,1)$,}
  \eas
  where $(e^{\xi\Del})_{\xi\ge 0}$ denotes the Neumann heat semigroup on $\Om$.
  Then standard smoothing properties thereof ensure that $h^{(\xi)}\in C^\infty(\bom\times [0,T_0])$ for all $\xi\in (0,1)$,
  and a comparison principle asserts that for each $\xi\in (0,1)$ we have $h^{(\xi)}\ge 0$ as well as
  \be{52.10}
	h^{(\xi)}(x,t) \le \|h(\cdot,t)\|_{L^\infty(\Om)} \le c_1\equiv c_1(u,v,T_0):=\|u\|_{L^\infty(\Om\times (0,T_0))}^2
	\quad \mbox{in $\Om$} \qquad \mbox{for all } t\in [0,T_0].
  \ee
  Known boundedness and continuity features of $(e^{\xi\Del})_{\xi\ge 0}$ furthermore guarantee that
  \be{52.11}
	\io \big|\na h^{(\xi)}(\cdot,t)\big|^2
	\le \io |\na h(\cdot,t)|^2
	\qquad \mbox{for all } t\in (0,T_0),
  \ee
  and that
  \be{52.12}
	h^{(\xi)}(\cdot,t) \to h(\cdot,t)
	\ \mbox{in } L^1(\Om)
	\quad \mbox{and} \quad
	\na h^{(\xi)}(\cdot,t) \to \na h(\cdot,t)
	\ \mbox{in } L^2(\Om)
	\quad \mbox{for a.e.~} t\in (0,T_0)
	\qquad \mbox{as } \xi\searrow 0,
  \ee
  so that, in particular,
  \bas
	& & \hs{-30mm}
	\int_0^{T_0} \io |h^{(\xi)}-h|
	+ \int_0^{T_0} \io |\na h^{(\xi)}-\na h|^2 \\
	&=& \int_0^{T_0} \|h^{(\xi)}(\cdot,t)-h(\cdot,t)\|_{L^1(\Om)} dt
	+ \int_0^{T_0} \|\na h^{(\xi)}(\cdot,t)-\na h(\cdot,t)\|_{L^2(\Om)}^2 dt \\
	&\to& 0
	\qquad \mbox{as } j\to\infty
  \eas
  due to the dominated convergence theorem when combined with (\ref{52.10}), (\ref{52.11}) and (\ref{52.12}).
  We can therefore find $(\xi_j)_{j\in\N} \subset (0,1)$ such that $\xi_j\to 0$ as $j\to\infty$, that
  \be{52.13}
	j\int_0^{T_0} \io |h^{(\xi_j)}-h| \to 0
	\quad \mbox{and} \quad
	\int_0^{T_0} \io |\na h^{(\xi_j)}-\na h|^2 \to 0
	\qquad \mbox{as } j\to\infty,
  \ee
  and that
  \be{52.14}
	h^{(\xi_j)} \to h
	\quad \mbox{a.e.~in } \Om\times (0,T_0)
	\qquad \mbox{as } j\to\infty.
  \ee
  Letting 
  \bas
	u_j:=\sqrt{h^{(\xi_j)}+\frac{1}{j^2}}
	\qquad \mbox{for } j\in\N,
  \eas
  we thus obtain a sequence of functions $u_j$ which lie in $C^\infty(\bom\times [0,T_0])$ by smoothness of 
  $0\le X \mapsto \sqrt{X+\frac{1}{j^2}}$, which thanks to (\ref{52.10}) satisfy $0\le u_j \le \sqrt{c_1+1}$
  in $\Om\times (0,T_0)$ for all $j\in\N$, which due to (\ref{52.14}) and (\ref{52.13}) have the property that
  $u_j\to u$ a.e.~in $\Om\times (0,T_0)$ as $j\to\infty$, that since $|\sqrt{X}-\sqrt{Y}|^2 \le |X-Y|$ for all
  $X\ge 0$ and $Y\ge 0$ we have
  \bas
	j \int_0^{T_0} \io (u_j-u)^2
	&=& j \int_0^{T_0} \io \Big\{ \sqrt{h^{(\xi_j)}+\frac{1}{j^2}} - \sqrt{h}\Big\}^2 \\
	&\le& j \int_0^{T_0} \io \big| h^{(\xi_j)}+\frac{1}{j^2} - h\Big| \\
	&\le& j \int_0^{T_0} \io \big|h^{(\xi_j)}-h\big|
	+ \frac{|\Om| T_0}{j} \\[2mm]
	&\to& 0
	\qquad \mbox{as } j\to\infty,
  \eas
  and for which from (\ref{52.13}) it moreover follows that
  \bas
	\int_0^{T_0} \io |\na u_j^2 - \na u^2|^2
	&=& \int_0^{T_0} \io \Big| \na \Big( h^{(\xi_j)}+\frac{1}{j^2}\Big)-\na h\Big|^2 \\
	&=& \int_0^{T_0} \io |\na h^{(\xi_j)}-\na h|^2 \\[2mm]
	&\to& 0
	\qquad \mbox{as } j\to\infty.
  \eas
  A sequence $(v_j)_{j\in\N} \subset C^\infty(\bom\times [0,T_0])$ with the respective properties claimed in 
  (\ref{52.2}), (\ref{52.4}), (\ref{52.5}), (\ref{52.6}) and (\ref{52.9}) can be obtained by a simplified variant
  of this construction,
  based on the use of the functions $g^{(\xi)}(x,t):=[e^{\xi\Del}v(\cdot,t)](x)$, $(x,t)\in\bom\times [0,T_0]$, $\xi\in (0,1)$,
  and the fact that then $\inf_{\Om\times (0,T_0)} v \le g^{(\xi)} \le \|v\|_{L^\infty(\Om\times (0,T_0))}$
  in $\Om\times (0,T_0)$ for all $\xi\in (0,1)$ according to two comparison arguments.
\qed
We can now introduce the announced non-degenerate approximations of (\ref{01}), and provide some information on regularity
of its solutions.
While the following statement in this regard already does rely on the pointwise bounds in (\ref{52.1})-(\ref{52.4}),
properties of population gradients such as that in (\ref{52.8}) are not yet needed here.
\begin{lem}\label{lem53}
  Let $n\ge 1, \chi\ge 0$ and $\ell\ge 0$, let $u_0$ and $v_0$ be such that (\ref{i1}) holds, let $(u,v)$ 
  and $(\ou,\ovv)$ be locally bounded weak solutions of (\ref{0}) in $\Om\times (0,T)$ for some $T\in (0,\infty]$,
  let $\vp_0\in C_0^\infty(\Om)$ and $\psi_0\in C_0^\infty(\Om)$,
  and taking $N=N(u,v,\ou,\ovv,\vp_0,\psi_0)$ from Lemma \ref{lem51}, for fixed $T_0\in (0,T)\sm N$ and $j\in\N$, let
  $u_j:=U_j[u,v,T_0]$, $v_j:=V_j[u,v,T_0]$, $\ou_j:=U_j[\ou,\ovv,T_0]$ and $\ovv_j:=V_j[\ou,\ovv,T_0]$ be as provided
  by Lemma \ref{lem52}.
  Then for each $j\in\N$, the problem
  \be{53.1}
	\lball
	\pa_t \vp_j
	= - \frac{1}{2} (u_j+\ou_j+\frac{1}{j})v_j\Del\vp_j
	- \frac{1}{2}(u_j+\ou_j)\na v_j\cdot\na\vp_j \\
	\hs{14mm}
	- \chi(u_j+\ou_j) v_j \na v_j\cdot\na\vp_j 
	- \ell v_j \vp_j
	+ v_j \psi_j,
	\qquad & x\in \Om, \ t\in (0,T_0), \\[1mm]
	\pa_t \psi_j 
	= -\Del\psi_j
	+ \chi \ou_j^2 \ovv_j \Del\vp_j
	+ \frac{1}{2} \na \ou_j^2 \cdot\na\vp_j \\
	\hs{14mm}
	- \chi \ou_j^2 \na v_j\cdot\na\vp_j
	+ \chi \ovv_j \na \ou_j^2 \cdot\na\vp_j
	+ \chi \ou_j^2 \na\ovv_j\cdot\na\vp_j \\
	\hs{14mm}
	- \ell \ou_j \vp_j
	+ \ou_j \psi_j,
	\qquad & x\in \Om, \ t\in (0,T_0), \\[1mm]
	\frac{\pa\vp_j}{\pa\nu}=\frac{\pa\psi_j}{\pa\nu}=0,
	\qquad & x\in\pO, \ t\in (0,T_0), \\[1mm]
	\vp_j(x,T_0)=\vp_0(x), \quad \psi_j(x,T_0)=\psi_0(x),
	\qquad & x\in\Om,
	\ear
  \ee 
  admits a classical solution $(\vp_j,\psi_j)$ with
  \be{53.2}
	\vp_j\in C^\infty(\bom\times [0,T_0])
	\qquad \mbox{and} \qquad
	\psi_j\in C^\infty(\bom\times [0,T_0]).
  \ee
  The family of these solutions has the property that with some $\Gamma=\Gamma(u,v,\ou,\ovv,T_0,\vp_0,\psi_0)>0$ we have
  \be{53.3}
	\io \vp_j^2(\cdot,t)
	+ \io |\na \vp_j(\cdot,t)|^2
	\le \Gamma
	\qquad \mbox{for all $t\in (0,T_0)$ and } j\in\N
  \ee
  and
  \be{53.4}
	\io \psi_j^2(\cdot,t)
	\le \Gamma
	\qquad \mbox{for all $t\in (0,T_0)$ and } j\in\N
  \ee
  as well as
  \be{53.6}
	\int_0^{T_0} \io |\Del\vp_j|^2 \le \Gamma j
	\qquad \mbox{for all } j\in\N.
  \ee
\end{lem}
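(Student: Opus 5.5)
The plan is to reverse time, setting $s:=T_0-t$, so that (\ref{53.1}) becomes a forward linear Neumann problem for $(\vp_j,\psi_j)(\cdot,T_0-s)$ with initial data $(\vp_0,\psi_0)$. Two facts drive the argument. First, by (\ref{52.1})--(\ref{52.4}) and Lemma \ref{lem52}, all coefficients are smooth on $\bom\times[0,T_0]$. Second, the principal coefficient in the first equation satisfies
\[
\tfrac12\big(u_j+\ou_j+\tfrac1j\big)v_j\ \ge\ \frac{1}{2jK}>0 .
\]
The resulting system is triangular at the level of principal parts: $\Del\vp_j$ enters the $\psi_j$-equation, but $\Del\psi_j$ does not enter the $\vp_j$-equation.

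\textbf{Step 1: existence and smoothness (\ref{53.2}).} I would use a fixed point argument.
\begin{itemize}
\item For given $\psi$ in a suitable H\"older space, solve the uniformly parabolic scalar equation for $\vp$.
\item Then solve the heat-type equation for $\psi$, in which $\Del\vp$ acts as a known source.
\item Use standard Schauder theory and linear growth bounds to obtain a contraction on small time intervals, and iterate to cover all of $[0,T_0]$. Alternatively, one may appeal directly to linear theory for parabolic systems with upper triangular principal part.
\end{itemize}
Since $\vp_0,\psi_0\in C_0^\infty(\Om)$ vanish near $\pO$, compatibility conditions of every order hold at $s=0$. Bootstrapping then yields $C^\infty(\bom\times[0,T_0])$.

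\textbf{Step 2: $j$-independent energy estimates.} Write $a_j:=\frac12(u_j+\ou_j+\frac1j)v_j$. I would test the reversed $\vp_j$-equation by $\vp_j-\Del\vp_j$ and the $\psi_j$-equation by $\psi_j$, and add the results. This gives a differential inequality for
\[
y(s):=\io\vp_j^2+\io|\na\vp_j|^2+\io\psi_j^2,
\]
with dissipation $\io a_j|\Del\vp_j|^2+\io|\na\psi_j|^2$. Every remaining term must be bounded by $\eta$ times this dissipation plus $C y$, with $C$ independent of $j$. The terms are handled as follows.
\begin{itemize}
\item The drift terms $(u_j+\ou_j)\na v_j\cdot\na\vp_j\,\Del\vp_j$ carry the factor $u_j+\ou_j$. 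Young's inequality therefore bounds them by $\eta(u_j+\ou_j)v_j|\Del\vp_j|^2+C\frac{(u_j+\ou_j)|\na v_j|^2}{v_j}|\na\vp_j|^2$, and the second coefficient is bounded by (\ref{52.1})--(\ref{52.4}).
\item For $\ell v_j\vp_j\Del\vp_j$ and $v_j\psi_j\Del\vp_j$, which lack a factor of $u_j+\ou_j$, I integrate by parts. This produces only terms of the form $\na\psi_j\cdot\na\vp_j$, $\psi_j\na v_j\cdot\na\vp_j$ and $|\na\vp_j|^2$, all controlled by $\eta\io|\na\psi_j|^2+Cy$.
\item In the $\psi_j$-equation, the term $\chi\ou_j^2\ovv_j\Del\vp_j\,\psi_j$ is controlled by $\eta\,\ou_j v_j|\Del\vp_j|^2+C\frac{\ou_j^3\ovv_j^2}{v_j}\psi_j^2$, which is admissible because $\ou_j\le u_j+\ou_j$.
\end{itemize}

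\textbf{Main obstacle: the $\na\ou_j^2$ terms.} The terms containing $\na\ou_j^2$ are the critical ones, because $\na\ou_j^2$ is only bounded in $L^2$ uniformly in $j$ (cf.\ (\ref{52.8})), not in $L^\infty$. My first attempt would be to remove the gradient from $\ou_j^2$ by integration by parts, using $\frac{\pa\vp_j}{\pa\nu}=0$:
\[
\io\psi_j\na\ou_j^2\cdot\na\vp_j=-\io\ou_j^2\na\psi_j\cdot\na\vp_j-\io\ou_j^2\psi_j\Del\vp_j .
\]
For the term with coefficient $\ovv_j$, I would first write $\ovv_j\na\ou_j^2=\na(\ovv_j\ou_j^2)-\ou_j^2\na\ovv_j$ and then integrate by parts in the same way. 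The resulting terms are of the types already treated: $\ou_j^2\psi_j\Del\vp_j$ is absorbed into $\eta\,\ou_j v_j|\Del\vp_j|^2$, and $\ou_j^2\na\psi_j\cdot\na\vp_j$ into $\eta\io|\na\psi_j|^2+Cy$. The remaining lower order terms, namely $\ou_j\psi_j^2$, $\ou_j\vp_j\psi_j$ and $\ou_j^2\na v_j\cdot\na\vp_j\,\psi_j$, are trivially bounded by $Cy$.

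\textbf{Conclusion.} Choosing $\eta$ small, I obtain
\[
y'+\io a_j|\Del\vp_j|^2\le Cy
\]
with $C$ independent of $j$. Gronwall's lemma on $(0,T_0)$ then gives (\ref{53.3}) and (\ref{53.4}), together with $\int_0^{T_0}\io a_j|\Del\vp_j|^2\le C$. Finally, since $a_j\ge\frac{1}{2jK}$, this last bound yields (\ref{53.6}).
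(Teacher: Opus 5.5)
Your proposal is correct and follows essentially the same route as the paper. The paper also proves existence by time reversal and standard linear theory for the triangular non-degenerate system, with compatibility coming from $\varphi_0,\psi_0\in C_0^\infty(\Omega)$. For the estimates it tests the $\varphi_j$-equation by $\Delta\varphi_j$ and by $\varphi_j$ and the $\psi_j$-equation by $\psi_j$, and removes the derivatives from $\nabla\overline{u}_j^2$ by exactly the integrations by parts you describe. It then concludes by Gr\"onwall's lemma and extracts the $\Gamma j$ bound on $\int_0^{T_0}\int_\Omega|\Delta\varphi_j|^2$ from the $\frac{1}{2j}\int_\Omega v_j|\Delta\varphi_j|^2$ part of the dissipation, using $v_j\ge\frac1K$.
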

\proof
  For each $j\in\N$, substituting $s=T_0-t$ for $t\in [0,T_0]$ we see that thanks to the nonnegativity of $u_j$ and $\ou_j$,
  (\ref{53.1}) actually is a non-degenerate parabolic problem which is linear with smooth coefficients, and which is of triangular
  cross-diffusive character. 
  As the inclusions $\vp_0\in C_0^\infty(\Om)$ and $\psi_0\in C_0^\infty(\Om)$ ensure that corresponding compatibility conditions
  of arbitrary order are fulfilled, standard parabolic theory (\cite{amann}) therefore ensures the existence of functions
  $\vp_j$ and $\psi_j$ which satisfy (\ref{53.2}) and solve (\ref{53.1}) in the classical sense in $\Om\times (0,T_0)$.\abs
  To derive (\ref{53.3})-(\ref{53.6}), we employ Lemma \ref{lem52} and hence find positive constants $c_i=c_i(u,v,\ou,\ovv,T_0)$,
  $i\in\{1,...,7\}$, such that
  \be{53.7}
	u_j \le c_1,
	\quad \ou_j \le c_2,
	\quad
	c_3 \le v_j \le c_4,
	\quad \ovv_j \le c_5,
	\quad |\na v_j| \le c_6
	\quad \mbox{and} \quad
	|\na\ovv_j| \le c_7
  \ee
  in $\Om\times (0,T_0)$ for all $j\in\N$.
  For $j\in\N$, we now test the first equation in (\ref{53.1}) by $\Del\vp_j$ to see that
  \bea{53.9}
	& & \hs{-20mm}
	- \frac{1}{2} \frac{d}{dt} \io |\na\vp_j|^2
	+ \frac{1}{2} \io (u_j+\ou_j)v_j |\Del\vp_j|^2
	+ \frac{1}{2j} \io v_j |\Del\vp_j|^2 \nn\\
	&=& - \frac{1}{2} \io (u_j+\ou_j) (\na v_j\cdot\na\vp_j) \Del\vp_j
	- \chi \io (u_j+\ou_j) v_j (\na v_j\cdot\na\vp_j)\Del\vp_j \nn\\
	& & - \ell \io v_j \vp_j\Del\vp_j
	+ \io v_j\psi_j \Del\vp_j
	\qquad \mbox{for all } t\in (0,T_0).
  \eea
  Here, Young's inequality together with (\ref{53.7}) entails that
  for all $t\in (0,T_0)$,
  \bea{53.10}
	- \frac{1}{2} \io (u_j+\ou_j) (\na v_j\cdot\na\vp_j) \Del\vp_j
	&\le& \frac{1}{4} \io (u_j+\ou_j)v_j |\Del\vp_j|^2
	+ \frac{1}{4} \io \frac{u_j+\ou_j}{v_j} |\na v_j|^2 |\na\vp_j|^2 \nn\\
	&\le& \frac{1}{4} \io (u_j+\ou_j)v_j |\Del\vp_j|^2
	+ \frac{(c_1+c_2)c_6^2}{4c_3} \io |\na\vp_j|^2,
  \eea
  and that
  \bea{53.11}
	- \chi \io (u_j+\ou_j) v_j (\na v_j\cdot\na\vp_j)\Del\vp_j
	&\le& \frac{1}{8} \io (u_j+\ou_j)v_j |\Del\vp_j|^2
	+ 2\chi^2 \io (u_j+\ou_j) v_j |\na v_j|^2 |\na\vp_j|^2 \nn\\
	&\le& \frac{1}{8} \io (u_j+\ou_j)v_j |\Del\vp_j|^2
	+ 2(c_1+c_2) c_4 c_6^2 \chi^2 \io |\na\vp_j|^2
  \eea
  for all $t\in (0,T_0)$,
  while two integrations by parts, followed by applications of Young's inequality and (\ref{53.7}), show that
  \bea{53.12}
	- \ell \io v_j\vp_j \Del\vp_j
	&=& \ell \io v_j |\na\vp_j|^2
	+ \ell \io \vp_j \na v_j\cdot\na\vp_j \nn\\
	&\le& c_4 \ell \io |\na\vp_j|^2
	+ \io |\na\vp_j|^2
	+ \frac{c_6^2 \ell^2}{4} \io \vp_j^2
	\qquad \mbox{for all } t\in (0,T_0)
  \eea
  and 
  \bea{53.13}
	\hs{-8mm}
	\io v_j \psi_j \Del\vp_j
	&=& - \io v_j \na\vp_j\cdot\na\psi_j
	- \io \psi_j \na v_j\cdot\na\vp_j \nn\\
	&\le& \frac{1}{2} \io |\na\psi_j|^2
	+ \frac{c_4^2}{2} \io |\na\vp_j|^2
	+ \io |\na\vp_j|^2
	+ \frac{c_6^2}{4} \io \psi_j^2
	\qquad \mbox{for all } t\in (0,T_0).
  \eea
  When inserted into (\ref{53.9}), the inequalities in (\ref{53.10})-(\ref{53.13}) imply that if we let
  $c_8\equiv c_8(u,v,\ou,\ovv,T_0):=\frac{(c_1+c_2)c_6^2}{4c_3} + 2(c_1+c_2) c_4 c_6^2 \chi^2 + c_4 \ell + \frac{c_4^2}{2} +2$,
  $c_9\equiv c_9(u,v,\ou,\ovv,T_0):=\frac{c_6^2 \ell^2}{4}$ and
  $c_{10}\equiv c_{10}(u,v,\ou,\ovv,T_0):=\frac{c_6^2}{4}$, then
  \bea{53.14}
	& & \hs{-30mm}
	- \frac{1}{2} \frac{d}{dt} \io |\na\vp_j|^2
	+ \frac{1}{8} \io (u_j+\ou_j)v_j |\Del\vp_j|^2
	+ \frac{1}{2j} \io v_j |\Del\vp_j|^2 \nn\\
	&\le& c_8 \io |\na\vp_j|^2 
	+ c_9 \io \vp_j^2
	+ \frac{1}{2} \io |\na \psi_j|^2
	+ c_{10} \io \psi_j^2
	\qquad \mbox{for all } t\in (0,T_0).
  \eea
  To compensate the third to last summand herein, we multiply the first equation in (\ref{53.1}) by $-\vp_j$ and thereby obtain that
  for all $t\in (0,T_0)$,
  \bea{53.15}
	- \frac{1}{2} \frac{d}{dt} \io \vp_j^2
	&=& \frac{1}{2} \io \Big(u_j+\ou_j+\frac{1}{j}\Big)v_j\vp_j\Del\vp_j
	+ \frac{1}{2} \io (u_j+\ou_j) \vp_j \na v_j\cdot\na\vp_j  \nn\\
	& & + \chi \io (u_j+\ou_j) v_j \vp_j \na v_j\cdot\na\vp_j 
	+ \ell \io v_j \vp_j^2
	- \io v_j \vp_j \psi_j,
  \eea
  where again by Young's inequality and (\ref{53.7}),
  \bas
	\frac{1}{2} \io \Big(u_j+\ou_j+\frac{1}{j}\Big)v_j\vp_j\Del\vp_j
	&\le& \frac{1}{16} \io (u_j+\ou_j) v_j |\Del\vp_j|^2
	+ \io (u_j+\ou_j) v_j \vp_j^2 \nn\\
	& & + \frac{1}{4j} \io v_j |\Del\vp_j|^2
	+ \frac{1}{4j} \io v_j \vp_j^2 \nn\\
	&\le& \frac{1}{16} \io (u_j+\ou_j) v_j |\Del\vp_j|^2
	+ (c_1+c_2) c_4 \io \vp_j^2 \nn\\
	& & + \frac{1}{4j} \io v_j |\Del\vp_j|^2
	+ \frac{c_4}{4} \io \vp_j^2
  \eas
  and
  \bas	
	\frac{1}{2} \io (u_j+\ou_j) \vp_j \na v_j\cdot\na\vp_j
	\le \io |\na\vp_j|^2
	+ \frac{(c_1+c_2)^2 c_6^2}{16} \io\vp_j^2 
  \eas
  and
  \bas
	\chi \io (u_j+\ou_j) v_j \vp_j \na v_j\cdot\na\vp_j
	\le \io |\na\vp_j|^2
	+ \frac{(c_1+c_2)^2 c_4^2 c_6^2 \chi^2}{4} \io \vp_j^2
  \eas
  as well as
  \bas
	\ell \io v_j \vp_j^2
	\le c_4 \ell \io \vp_j^2
  \eas
  and
  \bas
	- \io v_j \vp_j \psi_j
	\le \io \vp_j^2
	+ \frac{c_4^2}{4} \io \psi_j^2
  \eas
  for all $t\in (0,T_0)$.
  Consequently,
  \bea{53.16}
	- \frac{1}{2} \frac{d}{dt} \io \vp_j^2
	&\le& \frac{1}{16} \io (u_j+\ou_j) v_j |\Del\vp_j|^2
	+ \frac{1}{4j} \io v_j |\Del\vp_j|^2 \nn\\
	& & + 2 \io |\na\vp_j|^2
	+ c_{11} \io \vp_j^2
	+ c_{12} \io \psi_j^2
	\qquad \mbox{for all } t\in (0,T_0)
  \eea
  with
  $c_{11}\equiv c_{11}(u,v,\ou,\ovv,T_0):=(c_1+c_2)c_4 + \frac{c_4}{4} + \frac{(c_1+c_2)^2 c_6^2}{16} +
  \frac{(c_1+c_2)^2 c_4^2 c_6^2 \chi^2}{4} + c_4 \ell +1$ and
  $c_{12}\equiv c_{12}(u,v,\ou,\ovv,T_0):=\frac{c_4^2}{4}$.\abs
  To finally control the two last integrals in (\ref{53.14}) and the rightmost summand in (\ref{53.16}), 
  we multiply the second equation in (\ref{53.1}) by $-\psi_j$ and hence see upon an integration by parts that
  \bea{53.17}
	& & \hs{-30mm}
	- \frac{1}{2} \frac{d}{dt} \io \psi_j^2
	+ \io |\na\psi_j|^2 \nn\\
	&=& - \chi \io \ou_j^2 \ovv_j \psi_j\Del\vp_j
	- \frac{1}{2} \io \psi_j \na\ou_j^2 \cdot\na\vp_j \nn\\
	& & + \chi \io \ou_j^2 \psi_j \na v_j\cdot\na\vp_j
	- \chi \io \ovv_j \psi_j \na\ou_j^2 \cdot\na\vp_j
	- \chi \io \ou_j^2 \psi_j \na\ovv_j\cdot\na\vp_j \nn\\
	& & + \ell \io \ou_j \vp_j \psi_j
	- \io \ou_j \psi_j^2
	\qquad \mbox{for all } t\in (0,T_0).
  \eea
  Several further applications of Young's inequality and (\ref{53.7}) reveal that here
  \bea{53.18}
	- \chi \io \ou_j^2 \ovv_j \psi_j\Del\vp_j
	&\le& \frac{1}{32} \io \ou_j v_j |\Del\vp_j|^2
	+ 8\chi^2 \io \frac{\ou_j^3 \ovv_j^2}{v_j} \psi_j^2 \nn\\
	&\le& \frac{1}{32} \io \ou_j v_j |\Del\vp_j|^2
	+ \frac{8c_3^3 c_5^2 \chi^2}{c_3} \io \psi_j^2
  \eea
  and
  \bea{53.19}
	\chi \io \ou_j^2 \psi_j \na v_j\cdot\na\vp_j
	\le \io |\na\vp_j|^2
	+ \frac{c_2^4 c_6^2 \chi^2}{4} \io \psi_j^2
  \eea
  and
  \bea{53.20}
	- \chi \io \ou_j^2 \psi_j \na\ovv_j\cdot\na\vp_j
	\le \io |\na\vp_j|^2
	+ \frac{c_2^4 c_7^2 \chi^2}{4} \io \psi_j^2
  \eea
  as well as
  \bea{53.21}
	\ell \io \ou_j \vp_j \psi_j
	\le \io \vp_j^2 
	+ \frac{c_2^2 \ell^2}{4} \io \psi_j^2
  \eea
  for all $t\in (0,T_0)$, while clearly
  \be{53.22}
	- \io \ou_j \psi_j^2 \le 0
  \ee
  for all $t\in (0,T_0)$.
  In the crucial second and fourth summand on the right of (\ref{53.17}), we once again integrate by parts,
  and only after that employ Young's inequality and (\ref{53.7}), to confirm that
  \bea{53.23}
	- \frac{1}{2} \io \psi_j \na \ou_j^2\cdot\na\vp_j
	&=& \frac{1}{2} \io \ou_j^2 \psi_j \Del\vp_j
	+ \frac{1}{2} \io \ou_j^2 \na\vp_j\cdot\na\psi_j \nn\\
	&\le& \frac{1}{64} \io \ou_j v_j |\Del\vp_j|^2
	+ \frac{4c_2^3}{c_3} \io \psi_j^2 \nn\\
	& & + \frac{1}{4} \io |\na\psi_j|^2
	+ \frac{c_2^4}{4} \io |\na\vp_j|^2
  \eea
  and
  \bea{53.24}
	- \chi \io \ovv_j \psi_j \na\ou_j^2\cdot\na\vp_j
	&=& \chi \io \ou_j^2 \ovv_j \psi_j \Del\vp_j
	+ \chi \io \ou_j^2 \ovv_j \na\vp_j\cdot\na\psi_j
	+ \chi \io \ou_j^2 \psi_j \na\ovv_j\cdot\na\vp_j \nn\\
	&\le& \frac{1}{64} \io \ou_j v_j |\Del\vp_j|^2
	+ \frac{16 c_2^3 c_5^2}{c_3} \io \psi_j^2 \nn\\
	& & + \frac{1}{4} \io |\na\psi_j|^2
	+ c_2^4 c_5^2 \chi^2 \io |\na\vp_j|^2 \nn\\
	& & + \io |\na\vp_j|^2
	+ \frac{c_2^4 c_7^2 \chi^2}{4} \io \psi_j^2
  \eea
  for all $t\in (0,T_0)$.
  In conjunction with (\ref{53.18})-(\ref{53.22}), the inequalities in (\ref{53.23}) and (\ref{53.24}) ensure that if we write
  $c_{13}\equiv c_{13}(u,v,\ou,\ovv,T_0):=3+\frac{c_2^4}{4}+c_2^4 c_5^2 \chi^2$ and
  $c_{14}\equiv c_{14}(u,v,\ou,\ovv,T_0):=\frac{8c_3^3 c_5^2 \chi^2}{c_3} + \frac{c_2^4 c_6^2 \chi^2}{4} + \frac{c_2^4 c_7^2 \chi^2}{4}
  + \frac{c_2^2 \ell^2}{4} + \frac{4c_2^3}{c_3} + \frac{16 c_2^3 c_5^2}{c_3} + \frac{c_2^4 c_7^2 \chi^2}{4}$, 
  then since $\ou_j\le u_j+\ou_j$,
  \bas
	& & \hs{-20mm}
	- \frac{1}{2} \frac{d}{dt} \io \psi_j^2
	+ \frac{1}{2} \io |\na\psi_j|^2 \nn\\
	&\le& \frac{1}{16} \io (u_j+\ou_j) v_j |\Del\vp_j|^2
	+ c_{13} \io |\na\vp_j|^2
	+ \io \vp_j^2
	+ c_{14} \io \psi_j^2
  \eas
  for all $t\in (0,T_0)$.
  On adding this to (\ref{53.14}) and (\ref{53.16}), we altogether obtain that
  \bea{53.25}
	& & \hs{-20mm}
	- \frac{d}{dt} \bigg\{ \io |\na\vp_j|^2 + \io \vp_j^2 + \io \psi_j^2 \bigg\} 
	+ \frac{1}{2j} \io v_j |\Del\vp_j|^2 \nn\\
	&\le& c_{15} \cdot \bigg\{ \io |\na\vp_j|^2 + \io \vp_j^2 + \io \psi_j^2 \bigg\}
	\qquad \mbox{for all } t\in (0,T_0)
  \eea
  with $c_{15}\equiv c_{15}(u,v,\ou,\ovv,T_0):=2\max\big\{ c_8+2+c_{13}, c_9+c_{11}+1 , c_{10}+c_{12}+c_{14}\}$.
  By means of a Gr\"onwall type inequality, we thus infer that
  for all $t\in [0,T_0]$ and $j\in\N$,
  \bea{53.26}
	\io |\na\vp_j|^2 + \io \vp_j^2 + \io \psi_j^2
	&\le& c_{16} \equiv c_{16}(u,v,\ou,\ovv,T_0,\vp_0,\psi_0) \nn\\
	&:=& e^{c_{15} T_0} \cdot \bigg\{ \io |\na\vp_0|^2 + \io \vp_0^2 + \io \psi_0^2 \bigg\},
  \eea
  whereupon an integration in (\ref{53.25}) shows that
  \be{53.27}
	\frac{1}{2j} \int_0^{T_0} \io v_j |\Del\vp_j|^2
	\le c_{16} + c_{15} c_{16} T_0
	\qquad \mbox{for all } j\in\N.
  \ee
  With some suitably large $\Gamma=\Gamma(u,v,\ou,\ovv,T_0,\vp_0,\psi_0)>0$, we hence conclude (\ref{53.3}) and (\ref{53.4})
  from (\ref{53.26}), while (\ref{53.6}) results from (\ref{53.27}) and the fact that $v_j\ge c_3$ in $\Om\times (0,T_0)$
  by (\ref{53.7}).
\qed
Using the solutions to (\ref{53.1}) constructed above as test functions in (\ref{51.2}), now drawing on 
the gradient regularity property in (\ref{52.8}) in a corresponding limit passage we can accomplish the main step toward
our proof of Theorem \ref{theo55}:
\begin{lem}\label{lem54}
  Let $n\ge 1, \chi\ge 0$ and $\ell\ge 0$, assume (\ref{i1}), let $(u,v)$ 
  and $(\ou,\ovv)$ be two locally bounded weak solutions of (\ref{0}) in $\Om\times (0,T)$ for some $T\in (0,\infty]$,
  let $\vp_0\in C_0^\infty(\Om)$ and $\psi_0\in C_0^\infty(\Om)$,
  and let $N=N(u,v,\ou,\ovv,\vp_0,\psi_0)$ be as in Lemma \ref{lem51}.
  Then for each $T_0\in (0,T)\sm N$,
  \be{54.1}
	\io \big( u(\cdot,T_0)-\ou(\cdot,T_0)\big) \vp_0
	+ \io \big( v(\cdot,T_0)-\ovv(\cdot,T_0)\big) \psi_0
	= 0.
  \ee
\end{lem}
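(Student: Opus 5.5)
We plan to apply Lemma \ref{lem51} with the test pair $(\vp,\psi)=(\vp_j,\psi_j)$ from Lemma \ref{lem53}. This is admissible since $\vp_j,\psi_j\in C^\infty(\bom\times[0,T_0])$ attain the final data $\vp_0,\psi_0$. We then substitute the equations (\ref{53.1}) into the braces on the right of (\ref{51.2}), which reduces the right-hand side to a sum of error terms. Each error term is a product of $w$ or $z$ with a coefficient difference, multiplied by derivatives of $\vp_j$ or by $\vp_j,\psi_j$. The goal is to show that all these terms vanish as $j\to\infty$. Since the left side of (\ref{51.2}) does not depend on $j$, this yields (\ref{54.1}).

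Concretely, the $w$-brace becomes
\[
\tfrac12\big[(u+\ou)v-(u_j+\ou_j+\tfrac1j)v_j\big]\Del\vp_j+\tfrac12\big[(u+\ou)\na v-(u_j+\ou_j)\na v_j\big]\cdot\na\vp_j+\ldots+\ell(v-v_j)\vp_j-(v-v_j)\psi_j .
\]
The $z$-brace yields analogous differences, such as $\chi(\ou_j^2\ovv_j-\ou^2\ovv)\Del\vp_j$ and $\tfrac12(\na\ou_j^2-\na\ou^2)\cdot\na\vp_j$, together with the remaining gradient and zero-order terms.

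\textbf{Lower-order terms.} Terms involving only $\na\vp_j,\vp_j,\psi_j$ are controlled via Cauchy--Schwarz, using the uniform $L^2$ bounds (\ref{53.3}), (\ref{53.4}) and the boundedness of $w,z$. The coefficient differences go to zero in $L^2(\Om\times(0,T_0))$ by (\ref{52.5}), dominated convergence with (\ref{52.1})--(\ref{52.4}), and (\ref{52.9}). The terms $\na\ou_j^2-\na\ou^2$ go to zero in $L^2$ by (\ref{52.8}); this is exactly where assumption (\ref{b2}) enters.

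\textbf{Main obstacle: the $\Del\vp_j$ terms.} Only the bound (\ref{53.6}) is available, $\|\Del\vp_j\|_{L^2}\le\sqrt{\Gamma j}$. Hence we need the coefficients multiplying $\Del\vp_j$ to be $o(j^{-1/2})$ in $L^2$, and this is what (\ref{52.6}) and (\ref{52.7}) provide. We write, for instance,
\[
(u+\ou)v-(u_j+\ou_j)v_j=(u-u_j)v+(\ou-\ou_j)v+(u_j+\ou_j)(v-v_j).
\]
Each summand is bounded times a factor with $\sqrt j\,\|\cdot\|_{L^2}\to0$. The product with $\Del\vp_j$ is therefore $o(1)$. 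The extra term $\tfrac1j\int\!\!\int w v_j\Del\vp_j$ is bounded by $C j^{-1}\sqrt{j}\to0$. Similarly, $\ou_j^2\ovv_j-\ou^2\ovv$ splits into $(\ou_j-\ou)(\ou_j+\ou)\ovv_j+\ou^2(\ovv_j-\ovv)$, which is $o(j^{-1/2})$ in $L^2$.

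The subtle point is verifying that every $\Del\vp_j$ coefficient really appears only through such differences. In particular, the term $\tfrac12\ou^2\Del\vp$ was already cancelled in Lemma \ref{lem51}. One must also check that the remaining $\na\ou^2$ cross terms in (\ref{51.2}) match those of (\ref{53.1}) up to index-$j$ replacements (note that $\na v$ versus $\na v_j$ is mixed there). Once all terms are shown to tend to zero, we let $j\to\infty$ and conclude (\ref{54.1}).
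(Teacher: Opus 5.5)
Your proposal is correct and follows essentially the same route as the paper: you test (\ref{51.2}) with $(\vp_j,\psi_j)$, pair the $\sqrt{\Gamma j}$ bound (\ref{53.6}) on $\Del\vp_j$ with the $o(j^{-1/2})$ rates from (\ref{52.6})--(\ref{52.7}), and handle the gradient and zero-order terms via (\ref{53.3}), (\ref{53.4}), (\ref{52.8}) and (\ref{52.9}). The only term needing a little extra care is $(\ovv_j-\ovv)\na\ou_j^2$, because $\na\ou_j^2$ has no pointwise bound. The paper handles it using $|\na\ou_j^2|^2\to|\na\ou^2|^2$ in $L^1$ together with $(\ovv_j-\ovv)^2\wsto 0$. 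Alternatively, you can regroup as $\ovv_j(\na\ou_j^2-\na\ou^2)+(\ovv_j-\ovv)\na\ou^2$ and apply dominated convergence with majorant $C|\na\ou^2|^2$.
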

\proof
  Given $T_0\in (0,T)\sm N$, we let $(\vp_j)_{j\in\N}$ and $(\psi_j)_{j\in\N}$ be as in Lemma \ref{lem53}, and 
  taking $w$ and $z$ from (\ref{wz}) and combining 
  (\ref{53.1}) with (\ref{51.2}) we obtain the identity
  \bea{54.2}
	& & \hs{-20mm}
	\io w(\cdot,T_0) \vp_0 
	+ \io z(\cdot,T_0) \psi_0 \nn\\
	&=& - \frac{1}{2} \int_0^{T_0} \io w\cdot\Big\{ \Big(u_j+\ou_j+\frac{1}{j}\Big) v_j
	- (u+\ou) v\Big\} \Del\vp_j \nn\\
	& & - \frac{1}{2} \int_0^{T_0} \io w\cdot\Big\{ (u_j+\ou_j)\na v_j - (u+\ou)\na v\Big\} \cdot\na\vp_j \nn\\
	& & - \chi \int_0^{T_0} \io  w\cdot\Big\{ (u_j+\ou_j)v_j \na v_j - (u+\ou) v\na v\Big\} \cdot\na\vp_j \nn\\
	& & - \ell \int_0^{T_0} \io w (v_j-v) \vp_j
	+ \int_0^{T_0} \io w (v_j-v) \psi_j \nn\\
	& & + \chi \int_0^{T_0} \io z (\ou_j^2 \ovv_j - \ou^2 \ovv) \Del\vp_j 
	+\frac{1}{2} \int_0^{T_0} \io z (\na\ou_j^2 - \na \ou^2)\cdot\na\vp_j \nn\\
	& & - \chi \int_0^{T_0} \io z (\ou_j^2 \na v_j - \ou^2 \na v)\cdot\na\vp_j 
	+ \chi \int_0^{T_0} \io z(\ovv_j \na\ou_j^2 - \ovv\na \ou^2) \cdot\na\vp_j \nn\\
	& & + \chi \int_0^{T_0} \io z (\ou_j^2 \na\ovv_j - \ou^2 \na\ovv) \cdot\na\vp_j \nn\\
	& & - \ell \int_0^{T_0} \io z(\ou_j-\ou)\vp_j
	+ \int_0^{T_0} \io z (\ou_j-\ou)\psi_j
	\qquad \mbox{for all } j\in\N.
  \eea
  In order to estimate each of the integrals on the right-hand side herein, we use the boundedness of $u,\ou,\na v$ and $\na\ovv$
  in $\Om\times (0,T_0)$ when fixing $c_i>0$, $i\in\{1,...,8\}$, such that
  \be{54.3}
	|w|\le c_1,
	\quad u\le c_2,
	\quad \ou\le c_3,
	\quad |z|\le c_4,
	\quad v\le c_5,
	\quad \ovv\le c_6,
	\quad |\na v|\le c_7
	\quad \mbox{and} \quad
	|\na\ovv| \le c_8
  \ee
  in $\Om\times (0,T_0)$,
  and recall Lemma \ref{lem52} in choosing positive constants $c_i$, $i\in\{9,...,15\}$, in such a way that
  \be{54.4}
	u_j \le c_9,
	\quad \ou_j \le c_{10},
	\quad v_j \le c_{11},
	\quad \ovv_j \le c_{12},
	\quad |\na v_j|\le c_{13}
	\quad \mbox{and} \quad
	|\na\ovv_j|\le c_{14}
  \ee
  in $\Om\times (0,T_0)$ as well as
  \be{54.44}
	\int_0^{T_0} \io |\na \ou_j^2|^2 \le c_{15}
  \ee
  for all $j\in\N$.
  Moreover taking $\Gamma=\Gamma(u,v,\ou,\ovv,T_0,\vp_0,\psi_0)$ from Lemma \ref{lem53}, by means of the Cauchy-Schwarz inequality,
  (\ref{54.3}), (\ref{54.4}) and (\ref{53.6}) we can thus estimate
  \bas
	& & \hs{-20mm}
	\bigg| -\frac{1}{2} \int_0^{T_0} \io w\cdot\Big\{ \Big( u_j+\ou_j+\frac{1}{j}\Big) v_j - (u+\ou)v\Big\} \Del\vp_j \bigg|
		\nn\\
	&\le& \frac{1}{2} \cdot\bigg\{ \int_0^{T_0} \io |\Del\vp_j|^2\bigg\}^\frac{1}{2} \cdot
		\bigg\{ \int_0^{T_0} \io w^2 \cdot \Big\{ \Big(u_j+\ou_j+\frac{1}{j}\Big) v_j - (u+\ou)v\Big\}^2 \bigg\}^\frac{1}{2}
		\nn\\
	&\le& \frac{c_1 (\Gamma j)^\frac{1}{2}}{2} \Big\| \Big(u_j+\ou_j+\frac{1}{j}\Big) v_j - (u+\ou)v\Big\|_{L^2(\Om\times (0,T_0))}
		\nn\\
	&=& \frac{c_1 (\Gamma j)^\frac{1}{2}}{2} \Big\| (u_j-u) v_j + (\ou_j-\ou)v_j + (u+\ou)(v_j-v) + \frac{1}{j} v_j
		\Big\|_{L^2(\Om\times (0,T_0))} \nn\\
	&\le& \frac{c_1 (\Gamma j)^\frac{1}{2}}{2} \cdot \Big\{
	c_{11} \|u_j-u\|_{L^2(\Om\times (0,T_0))} + c_{11} \|\ou_j-\ou\|_{L^2(\Om\times (0,T_0))} \nn\\
	& & \hs{10mm}
	+ (c_2+c_3) \|v_j-v\|_{L^2(\Om\times (0,T_0))} + \frac{c_{11}}{j}\Big\} \nn\\
	&=& \frac{c_1 c_{11} \Gamma^\frac{1}{2}}{2} \cdot \bigg\{ j \int_0^{T_0} \io (u_j-u)^2\bigg\}^\frac{1}{2}
	+ \frac{c_1 c_{11} \Gamma^\frac{1}{2}}{2} \cdot \bigg\{ j \int_0^{T_0} \io (\ou_j-\ou)^2\bigg\}^\frac{1}{2} \nn\\
	& & + \frac{c_1 (c_2+c_3) \Gamma^\frac{1}{2}}{2} \cdot \bigg\{ j \int_0^{T_0} \io (v_j-v)^2 \bigg\}^\frac{1}{2}
	+ \frac{c_1 c_{11} \Gamma^\frac{1}{2}}{2j^\frac{1}{2}} 
	\qquad \mbox{for all } j\in\N,
  \eas
  so that thanks to (\ref{52.6}) and (\ref{52.7}),
  \be{54.5}
	-\frac{1}{2} \int_0^{T_0} \io w\cdot\Big\{ \Big( u_j+\ou_j+\frac{1}{j}\Big) v_j - (u+\ou)v\Big\} \Del\vp_j 
	\to 0
	\qquad \mbox{as } j\to\infty.
  \ee
foo
  Next, the Cauchy-Schwarz inequality, (\ref{53.3}), (\ref{54.3}) and (\ref{54.4}) imply that
  \bas
	& & \hs{-20mm}
	\bigg| - \frac{1}{2} \int_0^{T_0} \io w\cdot\Big\{ (u+\ou_j)\na v_j - (u+\ou)\na v\Big\}\cdot\na\vp_j\bigg| \nn\\
	&\le& \frac{c_1}{2} \cdot \bigg\{ \int_0^{T_0} \io |\na\vp_j|^2 \bigg\}^\frac{1}{2} \cdot
		\Big\| (u_j+\ou_j)\na v_j - (u+\ou)\na v\Big\|_{L^2(\Om\times (0,T_0))} \\
	&\le& \frac{c_1 (\Gamma T_0)^\frac{1}{2}}{2} \Big\| (u_j+\ou_j)\na v_j - (u+\ou)\na v\Big\|_{L^2(\Om\times (0,T_0))} \\
	&=& \frac{c_1 (\Gamma T_0)^\frac{1}{2}}{2} \Big\| (u_j-u)\na v_j + (\ou_j-\ou)\na v_j + (u+\ou)(\na v_j-\na v)
		\Big\|_{L^2(\Om\times (0,T_0))} \nn\\
	&\le& \frac{c_1 c_{13} (\Gamma T_0)^\frac{1}{2}}{2} \|u_j-u\|_{L^2(\Om\times (0,T_0))}
	+ \frac{c_1 c_{13} (\Gamma T_0)^\frac{1}{2}}{2} \|\ou_j-\ou\|_{L^2(\Om\times (0,T_0))} \nn\\
	& & + \frac{c_1 (c_2+c_3) (\Gamma T_0)^\frac{1}{2}}{2} \|\na v_j - \na v\|_{L^2(\Om\times (0,T_0))}
	\qquad \mbox{for all } j\in\N,
  \eas
  so that using (\ref{52.9}) and again (\ref{52.6}) we find that
  \be{54.6}
	- \frac{1}{2} \int_0^{T_0} \io w\cdot\Big\{ (u+\ou_j)\na v_j - (u+\ou)\na v\Big\}\cdot\na\vp_j
	\to 0
	\qquad \mbox{as } j\to\infty.
  \ee
  Similarly, in addition utilizing (\ref{52.7}) and (\ref{53.4}), we see that
  \bea{54.7}
	& & \hs{-16mm}
	\bigg| - \chi \int_0^{T_0} \io w\cdot\Big\{ (u_j+\ou_j)v_j \na v_j - (u+\ou) v\na v\Big\} \cdot\na\vp_j \bigg| \nn\\
	&\le& c_1 \chi (\Gamma T_0)^\frac{1}{2} 
		\Big\| (u_j+\ou_j)v_j \na v_j - (u+\ou)v\na v\Big\|_{L^2(\Om\times (0,T_0))} \nn\\
	&=& c_1\chi (\Gamma T_0)^\frac{1}{2} 
		\Big\| (u_j-u)v_j \na v_j + (\ou_j-\ou) v_j\na v_j \nn\\
	& & \hs{20mm}
	+ (u+\ou)(v_j-v)\na v_j + (u+\ou)v(\na v_j-\na v)
		\Big\|_{L^2(\Om\times (0,T_0))} \nn\\
	&\le& c_1 c_{11} c_{13} (\Gamma T_0)^\frac{1}{2} \|u_j-u\|_{L^2(\Om\times (0,T_0))}
	+ c_1 c_{11} c_{13} (\Gamma T_0)^\frac{1}{2} \|\ou_j-\ou\|_{L^2(\Om\times (0,T_0))} \nn\\
	& & + c_1 (c_2+c_3) c_{13} (\Gamma T_0)^\frac{1}{2} \|v_j-v\|_{L^2(\Om\times (0,T_0))}
	+ c_1(c_2+c_3) c_5 (\Gamma T_0)^\frac{1}{2} \|\na v_j-\na v\|_{L^2(\Om\times (0,T_0))} \nn\\[2mm]
	&\to& 0
	\qquad \mbox{as } j\to\infty
  \eea
  and
  \bea{54.8}
	\bigg| - \ell \int_0^{T_0} \io w(v_j-v) \vp_j\bigg|
	\le c_1 \ell (\Gamma T_0)^\frac{1}{2} \|v_j-v\|_{L^2(\Om\times (0,T_0))}
	\to 0
  \eea
  as well as
  \be{54.9}
	\bigg| \int_0^{T_0} \io w(v_j-v)\psi_j\bigg|
	\le c_1(\Gamma T_0)^\frac{1}{2} \|v_j-v\|_{L^2(\Om\times (0,T_0))}
	\to 0
  \ee
  as $j\to\infty$.
  Apart from that, again by (\ref{53.6}), (\ref{52.6}) and (\ref{52.7}),
  \bea{54.10}
	& & \hs{-20mm}
	\bigg| \chi \int_0^{T_0} \io z(\ou_j^2 \ovv_j - \ou^2 \ovv)\Del\vp_j\bigg| \nn\\
	&\le& c_4 \chi \cdot\bigg\{ \int_0^{T_0} \io |\Del\vp_j|^2 \bigg\}^\frac{1}{2} \cdot 
		\| \ou_j^2 \ovv_j - \ou^2 \ovv\|_{L^2(\Om\times (0,T_0))} \nn\\
	&\le& c_4 \chi (\Gamma j)^\frac{1}{2} 	
		\Big\| (\ou_j+\ou) (\ou_j-\ou)\ovv_j + \ou^2 (\ovv_j-\ovv)\Big\|_{L^2(\Om\times (0,T_0))} \nn\\
	&\le& (c_3+c_{10}) c_4 c_{12} \Gamma^\frac{1}{2} \cdot \bigg\{ j \int_0^{T_0} \io (\ou_j-\ou)^2 \bigg\}^\frac{1}{2} 
	+ c_3^2 c_4 \chi \Gamma^\frac{1}{2} \cdot \bigg\{ \int_0^{T_0} \io (\ovv_j-\ovv)^2 \bigg\}^\frac{1}{2} \nn\\[2mm]
	&\to& 0
	\qquad \mbox{as } j\to\infty,
  \eea
  while by (\ref{52.8}),
  \bea{54.11}
	\bigg| \frac{1}{2} \int_0^{T_0} \io z (\na\ou_j^2 -\na\ou^2)\cdot\na\vp_j\bigg|
	&\le& \frac{c_4 (\Gamma T_0)^\frac{1}{2}}{2} \cdot \bigg\{ \int_0^{T_0} \io |\na \ou_j^2-\na\ou^2|^2\bigg\}^\frac{1}{2} 
		\nn\\[2mm]
	&\to& 0
	\qquad \mbox{as } j\to\infty
  \eea
  and by (\ref{52.6}) and (\ref{52.9}),
  \bea{54.12}
	& & \hs{-20mm}
	\bigg| - \chi \int_0^{T_0} \io z(\ou_j^2 \na v_j-\ou^2 \na v)\cdot\na\vp_j\bigg| \nn\\
	&\le& c_4 (\Gamma T_0)^\frac{1}{2} \chi \|\ou_j^2\na v_j - \ou^2 \na v\|_{L^2(\Om\times (0,T_0))} \nn\\
	&=& c_4 (\Gamma T_0)^\frac{1}{2} \chi \Big\| (\ou_j+\ou)(\ou_j-\ou)\na v_j + \ou^2 (\na v_j-\na v)
		\Big\|_{L^2(\Om\times (0,T_0))} \nn\\
	&\le& (c_3+c_{10}) c_4 c_{13} (\Gamma T_0)^\frac{1}{2} \chi \|\ou_j-\ou\|_{L^2(\Om\times (0,T_0))}
	+ c_3^2 c_4 (\Gamma T_0)^\frac{1}{2} \chi \|\na v_j-\na v\|_{L^2(\Om\times (0,T_0))} \nn\\[2mm]
	&\to& 0
	\qquad \mbox{as } j\to\infty.
  \eea
  Furthermore, since $\na \ou_j^2 \to \na \ou^2$ in $L^2(\Om\times (0,T_0))$ and hence 
  $|\na \ou_j^2|^2 \to |\na \ou^2|^2$ in $L^1(\Om\times (0,T_0))$ as $j\to\infty$ according to (\ref{52.8}), and since
  $(v_j-v)^2\wsto 0$ in $L^\infty(\Om\times (0,T_0))$ as $j\to\infty$ thanks to (\ref{52.5}) and (\ref{54.4}),
  it follows that
  \bas
	\int_0^{T_0} \io (\ovv_j-\ovv)^2 |\na\ou_j^2|^2 \to 0
	\qquad \mbox{as } j\to\infty
  \eas
  and thus
  \bea{54.13}
	& & \hs{-20mm}
	\bigg| \chi \int_0^{T_0} \io z (\ovv_j \na\ou_j^2 - \ovv \na \ou^2)\cdot\na\vp_j\bigg| \nn\\
	&\le& c_4 \chi (\Gamma T_0)^\frac{1}{2} \|\ovv_j \na\ou_j^2 - \ovv \na \ou^2\|_{L^2(\Om\times (0,T_0))} \nn\\
	&=& c_4 \chi (\Gamma T_0)^\frac{1}{2} 
		\Big\| (\ovv_j-\ovv)\na\ou_j^2 + \ovv(\na\ou_j^2-\na\ou^2)\Big\|_{L^2(\Om\times (0,T_0))} \nn\\
	&\le& c_4 \chi (\Gamma T_0)^\frac{1}{2} \cdot \bigg\{ \int_0^{T_0} \io (\ovv_j-\ovv)^2 |\na\ou_j^2|^2 \bigg\}^\frac{1}{2}
	+ c_4 c_6 \chi (\Gamma T_0)^\frac{1}{2} \|\na\ou_j^2 - \na \ou^2\|_{L^2(\Om\times (0,T_0))} \nn\\[2mm]
	&\to& 0
	\qquad \mbox{as } j\to\infty,
  \eea
  whereas similarly using that
  \bas
	\int_0^{T_0} \io (\ou_j-\ou)^2 |\na \ovv_j|^2 \to 0
	\qquad \mbox{as } j\to\infty
  \eas
  due to (\ref{52.5}), (\ref{52.9}) and (\ref{54.4}), we obtain that also
  \bea{54.14}
	& & \hs{-14mm}
	\bigg| \chi \int_0^{T_0} \io z (\ou_j^2\na\ovv_j - \ou^2 \na\ovv)\cdot\na\vp_j\bigg| \nn\\
	&\le& c_4 \chi (\Gamma T_0)^\frac{1}{2} \|\ou_j^2 \na\ovv_j - \ou^2 \na\ovv\|_{L^2(\Om\times (0,T_0))} \nn\\
	&=& c_4 \chi (\Gamma T_0)^\frac{1}{2} 
		\Big\| (\ou_j+\ou)(\ou_j-\ou) \na\ovv_j + \ou^2 (\na\ovv_j-\na\ovv)\Big\|_{L^2(\Om\times (0,T_0))} \nn\\
	&\le& (c_3+c_{10}) c_4 \chi (\Gamma T_0)^\frac{1}{2} \cdot \bigg\{ \int_0^{T_0} \io (\ou_j-\ou)^2 |\na\ovv_j|^2 
		\bigg\}^\frac{1}{2}
	+ c_3^2 c_4 \chi (\Gamma T_0)^\frac{1}{2} \|\na\ovv_j-\na\ovv\|_{L^2(\Om\times (0,T_0))} \nn\\[2mm]
	&\to& 0
	\qquad \mbox{as } j\to\infty.
  \eea
  Since, finally, 
  \be{54.15}
	\bigg| -\ell \int_0^{T_0} \io z(\ou_j-\ou)\vp_j\bigg|
	\le c_4 \ell (\Gamma T_0)^\frac{1}{2} \|\ou_j-\ou\|_{L^2(\Om\times (0,T_0))}
	\to 0
	\qquad \mbox{as } j\to\infty
  \ee
  and
  \be{54.16}
	\bigg| \int_0^{T_0} \io z(\ou_j-\ou)\psi_j\bigg|
	\le c_4 (\Gamma T_0)^\frac{1}{2} \|\ou_j-\ou\|_{L^2(\Om\times (0,T_0))}
	\to 0
	\qquad \mbox{as } j\to\infty
  \ee
  according to (\ref{54.3}), (\ref{53.3}), (\ref{53.4}) and (\ref{52.6}), upon collecting (\ref{54.5})-(\ref{54.16})
  we infer (\ref{54.1}) as a consequence of (\ref{54.2}).
\qed
In fact, thanks to the flexibility left above with respect to the choice of $\vp$ and $\psi$ in (\ref{54.1}),
by means of a final and fairly simple approximation argument we can complete our derivation of the claimed uniqueness property:\abs
\proofc of Theorem \ref{theo55}. \quad
  Let $(\vp_{0j})_{j\in\N} \subset C_0^\infty(\Om)$ and $(\psi_{0j})_{j\in\N} \subset C_0^\infty(\Om)$ be such that
  $\{\vp_{0j} \ | \ j\in\N\}$ and $\{\psi_{0j} \ | \ j\in\N\}$ are both dense subsets of $L^1(\Om)$ containing the trivial function,
  and for $j\in\N$ letting $N(u,v,\ou,\ovv,\vp_{0j},\psi_{0j})$ denote the corresponding null sets obtained in Lemma \ref{lem51},
  we define $N_\star:=\bigcup_{j\in\N} N(u,v,\ou,\ovv,\vp_{0j},\psi_{0j})$.
  For arbitrary $T_0\in (0,T)\sm N_\star$, we may then apply (\ref{54.1}) to $\vp_0:=\vp_{0j}$ and $\psi:=0$ to see that
  \bas
	\io \big( u(\cdot,T_0)-\ou(\cdot,T_0)\big) \vp_{0j}=0
	\qquad \mbox{for all } j\in\N,
  \eas
  which by the density property of $\{\vp_{0j} \ | \ j\in\N\}$ implies that $u(\cdot,T_0)=\ou(\cdot,T_0)$ a.e.~in $\Om$.
  Since it can similarly be seen that $v(\cdot,T_0)=\ovv(\cdot,T_0)$ a.e.~in $\Om$, from the fact that $|N_\star|=0$ and the 
  Fubini-Tonelli theorem we infer that, indeed, $(u,v)=(\ou,\ovv)$ a.e.~in $\Om\times (0,T)$.
\qed
\mysection{Continuous dependence. Proof of Theorems \ref{theo33} and \ref{theo34}}
In order to prepare our analysis related to Theorem \ref{theo33} and Theorem \ref{theo34}, let us import 
some results known from the literature on (\ref{0}) in settings of initial data which are Lipschitz
continuous in both components.
The following extracts and summarizes from the one-, two- and three-dimensional studies in 
\cite{liwin_CPAA}, \cite{win_wu}, \cite{win_JDE}, \cite{win_wu2} and \cite{taowin_DCDS2027} what will be needed in the sequel;
we remark here that in line with their respective focal points these precedents essentially do not formulate
the dependencies of estimates such as that in (\ref{333.5}) on the parameters $K$ and $\LL$ appearing below explicitly,
but that these can readily be verified by trivial adaptation in the corresponding arguments.
\begin{lem}\label{lem333}
  Let $n\le 3$.
  Then there exist $(\del_1(K,\LL))_{K>0,\LL>0} \subset (0,\infty)$ and $(\Lam_1(K,\LL))_{K>0,\LL>0} \subset (0,\infty)$
  with the property that
  \be{333.1}
	\lball
	\del_1(K,\LL) = +\infty
	\quad \mbox{for all $K>0$ and $\LL>0$}
	\qquad & \mbox{if $n\le 2$ and $\Om$ is convex,} \\[1mm]
	\del_1(K,\LL) \to + \infty
	\quad \mbox{for all $K>0$ as $\LL\searrow 0$}
	\qquad & \mbox{if $n\le 3$ and $\Om$ is arbitrary,}
	\ear
  \ee
  and that if
  $\chi\in [0,\LL)$ and $\ell\in [0,\LL)$, if
  \be{init}
	\lbal
	u_0\in W^{1,\infty}(\Om)
	\mbox{ is nonnegative,		
		\quad and that} \\[1mm]
	v_0 \in W^{1,\infty}(\Om)	
	\mbox{ is such that $v_0>0$ in $\bom$,}
	\ear
  \ee
  and if
  \be{333.2}
	\|u_0\|_{L^\infty(\Om)} < K,
	\quad
	\io u_0>\frac{1}{K},
	\quad
	\|v_0\|_{L^\infty(\Om)} < K
	\quad \mbox{and} \quad
	\Big\|\frac{\na v_0}{v_0}\Big\|_{L^\infty(\Om)} < K
  \ee
  as well as
  \be{333.3}
	\io v_0 < \del_1(K,\LL),
  \ee
  then for each $\eps\in (0,1)$ there exists a global classical solution $(\ueps,\veps)$ of (\ref{0}) with initial data 
  $(u_0+\eps,v_0)$, satisfying
  \be{333.4}
	\lbal
	\ueps\in C^0(\bom\times [0,\infty)) \cap C^{2,1}(\bom\times (0,\infty))
	\qquad \mbox{and} \\[1mm]
	\veps\in\bigcap_{q>n} C^0([0,\infty);W^{1,q}(\Om)) \cap C^{2,1}(\bom\times (0,\infty))
	\ear
  \ee
  as well as $\ueps>0$ and $\veps>0$ in $\bom\times [0,\infty)$, and such that
  \be{333.5}
	\|\ueps(\cdot,t)\|_{L^\infty(\Om)} \le \Lambda_1(K,\LL)
	\qquad \mbox{for all } t>0.
  \ee
\end{lem}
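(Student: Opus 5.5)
The plan is to follow the known existence theory for (\ref{0}) with positive Lipschitz initial data from \cite{liwin_CPAA}, \cite{win_wu}, \cite{win_JDE}, \cite{win_wu2} and \cite{taowin_DCDS2027}. Throughout, I would track how every constant depends on the data, so that all bounds end up depending only on $K$ and $\LL$. The proof has three stages.

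\textbf{Stage 1: local existence and positivity.} Fix $\eps\in (0,1)$. The data $(u_0+\eps,v_0)$ satisfy (\ref{init}), and $u_0+\eps\ge\eps$ and $v_0\ge \min_{\bom} v_0>0$. Hence near $t=0$ the diffusion coefficient $uv$ is bounded away from zero, and the problem is a non-degenerate quasilinear parabolic system. Amann's theory (\cite{amann}) then gives a maximal classical solution on $\Om\times (0,T_{max,\eps})$ with the regularity in (\ref{333.4}). It also gives the standard extension criterion: if $T_{max,\eps}<\infty$, then one of $\|\ueps\|_{L^\infty}$, $\|\veps\|_{W^{1,q}}$, $\|1/\ueps\|_{L^\infty}$ or $\|1/\veps\|_{L^\infty}$ must blow up.
\begin{itemize}
\item Positivity of $\veps$ follows by comparison, since $\pa_t \veps\ge\Del\veps-\|\ueps\|_{L^\infty}\veps$. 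This gives $\veps\ge (\min v_0)\, e^{-\int_0^t\|\ueps\|_{L^\infty}}$, which is positive on every finite interval once $\ueps$ is bounded.
\item Positivity of $\ueps$ follows by rewriting the first equation in non-divergence form and comparing with a spatially constant subsolution of the form $\eps\, e^{-Ct}$. This uses that $\ell\ge0$ and that $\Del\veps$ and $\na\veps$ are bounded on compact time intervals by parabolic regularity.
\end{itemize}
It therefore remains to obtain $t$-independent $L^\infty$ bounds for $\ueps$, together with bounds for $\na\veps$ on finite intervals.

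\textbf{Stage 2: a priori estimates, the main work.} I would reproduce the energy arguments of the cited papers.
\begin{itemize}
\item The basic bounds are $\|\veps\|_{L^\infty}\le\|v_0\|_{L^\infty}<K$, the mass identity $\frac{d}{dt}\io\ueps=\ell\io\ueps\veps$, and $\int_0^\infty\io\ueps\veps\le\io v_0$ from the second equation. Together these control $\io\ueps$ by $(K+1)e^{\LL K}$ for all times.
\item Then I would use the coupled functional built from $\io\ueps\ln\ueps$ (or $\io \ueps^p$) and $\io |\na\veps|^2/\veps$, i.e.\ $\io |\na\sqrt{\veps}|^2$. Its initial value is controlled precisely by $\|\na v_0/v_0\|_{L^\infty}<K$ and $\|v_0\|_{L^\infty}<K$. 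Its dissipation contains $\io \ueps\veps|\na\ueps|^2$-type terms.
\item In the case $n\le2$ with $\Om$ convex, the boundary integral $\int_{\pO}\pa_\nu|\na\veps|^2$ is nonpositive. The two-dimensional Gagliardo--Nirenberg inequality then closes the estimate for any size of $\chi$, $\ell$ and $\io v_0$, which gives $\del_1=+\infty$.
\item In general domains with $n\le3$, the cross-diffusive and growth contributions are proportional to $\chi$ and $\ell$, and smallness of $\io v_0$ relative to these is used to absorb them. Since the thresholds degenerate only through $\LL$, this gives $\del_1(K,\LL)\to\infty$ as $\LL\searrow0$.
\item From the resulting $L^p$ bounds for $\ueps$, uniform in time, a Moser-type iteration for the first equation, exploiting the weight $\veps$ via the bounds on $\io|\na\sqrt{\veps}|^2$, yields (\ref{333.5}).
\end{itemize}
The lower bound $\io u_0>1/K$ enters where a lower bound on the mass is needed to handle the degenerate weight.

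\textbf{Stage 3: dependence of the constants.} The main obstacle is not new mathematics but checking that every constant in these cited estimates depends on the data only through the four quantities in (\ref{333.2}), on $\io v_0$ only through (\ref{333.3}), and on $\chi,\ell$ only through the upper bound $\LL$, and in particular not on $\eps$ or on $\|\na u_0\|_{L^\infty}$. I would go through the cited proofs line by line, and replace each occurrence of $\chi$, $\ell$ by $\LL$ and each norm of the data by $K$, using monotonicity of the constants in these quantities. With the uniform bound (\ref{333.5}) in hand, the extension criterion of Stage 1 gives $T_{max,\eps}=\infty$, which completes the proof.
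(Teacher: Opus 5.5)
Your proposal takes the same route as the paper. The paper gives no self-contained proof of this lemma: it imports the result from the cited one-, two- and three-dimensional existence works, and remarks only that the dependence of $\del_1$ and $\Lam_1$ on $K$ and $\LL$ follows by trivial adaptation of those arguments, which is exactly your Stage 3. Your Stage 2 details go beyond what the paper records and contain only harmless slips that do not affect the approach; for example, conservation of $\io\ueps+\ell\io\veps$ gives $\io\ueps(\cdot,t)<(1+K+\LL K)|\Om|$, so your mass bound is missing the factor $|\Om|$.
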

Based on $L^\infty$ estimates of the form in (\ref{333.5}), the corresponding second solution components can be seen 
to satisfy certain elliptic Harnack inequalities. More precisely, Lemma \ref{lem41} below will crucially rely on the following
consequence of a pointwise property in parabolic problems, drawn in \cite[Lemma 6.2]{win_wu2} on the basis of a result
from \cite{huska}.
\begin{lem}\label{lem_harnack}
  Let $n\ge 1$ and $K_0>0$.
  Then there exists $\lam(K_0)>0$ with the property that whenever $T>0$,
  $f\in C^0(\bom\times [0,T))$, $w_0\in W^{1,\infty}(\Om)$ and
  $w\in \bigcap_{q>n} C^0([0,T);W^{1,q}(\Om)) \cap C^{2,1}(\bom\times (0,T))$ are such that
  $w_0>0$ in $\bom$ and
  \be{22.1}
	\Big\| \frac{\na w_0}{w_0}\Big\|_{L^\infty(\Om)} \le K_0
  \ee
  and
  \be{22.2}
	\|f(\cdot,t)\|_{L^\infty(\Om)} \le K_0
	\qquad \mbox{for all } t\in (0,T),
  \ee
  and that
  \be{0w}
	\lball
	w_t = \Del w + f(x,t) w,
	\qquad & x\in \Om, \, t\in (0,T), \\[1mm]
	\frac{\pa w}{\pa\nu}=0,
	\qquad & x\in \pO, \, t\in (0,T), \\[1mm]
	w(x,0)=w_0(x),
	\qquad & x\in \Om,
	\ear
  \ee
  it follows that
  \be{22.3}
	w(x,t) \ge \lam(K_0) \|w(\cdot,t)\|_{L^\infty(\Om)}
	\qquad \mbox{for all $x\in \Om$ and } t\in (0,T).
  \ee
\end{lem}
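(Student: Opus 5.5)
The plan is to combine a comparison argument for short times, which exploits (\ref{22.1}), with a uniform parabolic Harnack inequality for large times, which exploits only (\ref{22.2}). In both regimes all constants will depend on $K_0$ and $\Om$ only. Throughout, fix the time scale $\tau:=1$.

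\textbf{Step 1: a priori two-sided comparison.} Since $-K_0\le f\le K_0$ by (\ref{22.2}), the parabolic comparison principle applies to (\ref{0w}), with the Neumann condition and continuity up to $t=0$ from $w\in C^0([0,T);W^{1,q}(\Om))$ for $q>n$. It gives, for all $0\le s<t<T$,
\[
	e^{-K_0(t-s)}\inf_{\Om} w(\cdot,s) \le w(\cdot,t) \le e^{K_0(t-s)} \|w(\cdot,s)\|_{L^\infty(\Om)} .
\]
In particular $\|w(\cdot,t)\|_{L^\infty(\Om)}\le e^{K_0(t-s)}\|w(\cdot,s)\|_{L^\infty(\Om)}$, and $w>0$ throughout.

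\textbf{Step 2: small times $t\le 2\tau$.} The bound (\ref{22.1}) says that $|\na \ln w_0|\le K_0$. Integrating along paths in the smoothly bounded domain $\Om$, whose intrinsic diameter $D_\Om$ is finite, yields $w_0(x)\ge e^{-K_0D_\Om}w_0(y)$ for all $x,y\in\Om$. Hence $\inf_\Om w_0\ge e^{-K_0 D_\Om}\|w_0\|_{L^\infty(\Om)}$. Inserting this into both inequalities of Step 1 with $s=0$ gives
\[
	w(x,t) \ge e^{-2K_0t-K_0D_\Om}\|w(\cdot,t)\|_{L^\infty(\Om)} \ge e^{-4K_0-K_0D_\Om}\|w(\cdot,t)\|_{L^\infty(\Om)}
\]
for all $t\le 2$.

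\textbf{Step 3: large times $t>2\tau$.} Here I invoke a Harnack inequality for nonnegative solutions of linear parabolic equations with bounded zero-order coefficient, valid up to the Neumann boundary (Krylov--Safonov/Moser type in the boundary version used in \cite{huska}). Applied on the cylinder $\Om\times(t-2,t]$, it states that there is $C_H=C_H(K_0,\Om)$ with
\[
	\sup_{\Om} w(\cdot,t-1) \le C_H \inf_{\Om} w(\cdot,t).
\]
Combining this with $\|w(\cdot,t)\|_{L^\infty(\Om)}\le e^{K_0}\|w(\cdot,t-1)\|_{L^\infty(\Om)}$ from Step 1 gives $w(x,t)\ge C_H^{-1}e^{-K_0}\|w(\cdot,t)\|_{L^\infty(\Om)}$. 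Since the cylinder length is fixed, $C_H$ does not depend on $t$ or $T$. The lemma then follows with $\lam(K_0):=\min\{e^{-4K_0-K_0D_\Om},\,C_H^{-1}e^{-K_0}\}$.

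\textbf{Main obstacle.} The delicate point is Step 3: obtaining a Harnack constant that is uniform up to $\pO$ and depends only on $\|f\|_{L^\infty}$. A Bernstein-type gradient bound for $\ln w$ is not available, because $f$ is merely bounded and not differentiable. I would first try to cite the boundary Harnack result of \cite{huska} directly. Failing that, I would locally flatten $\pO$ and use even reflection across the flattened boundary. This converts the Neumann problem near $\pO$ into an interior problem for a divergence-form operator with bounded measurable coefficients. Moser's parabolic Harnack inequality then applies to that problem, and a finite covering of $\bom$, chained along time slices, transfers the local estimates to all of $\Om$.
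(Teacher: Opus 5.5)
Your argument is correct and follows the route the paper relies on. The paper gives no proof of its own; it imports the lemma from \cite[Lemma 6.2]{win_wu2}, where it rests on the Neumann-type Harnack inequality of \cite{huska}. Your split is a sound self-contained version of that reasoning: a comparison argument exploiting (\ref{22.1}) for $t\le 2$, and for $t>2$ a time-lagged global Harnack inequality upgraded to a same-time bound via $\|w(\cdot,t)\|_{L^\infty(\Om)}\le e^{K_0}\|w(\cdot,t-1)\|_{L^\infty(\Om)}$. Note only that $\lam$ then also depends on $n$ and $\Om$, which is harmless since $\Om$ is fixed throughout, and that connectedness of $\Om$ is used both for $D_\Om<\infty$ and for the Harnack chaining.
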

Now the core of our reasoning in this section can be found in the following argument which facilitates a simultaneous
treatment of the existence statement made in Theorem \ref{theo55}, and of the claim concerning continuous dependence appearing
therein. In a key place, the following lemma relies on the uniqueness property asserted by Theorem \ref{theo55}.
\begin{lem}\label{lem41}
  Let $n\ge 1$, and suppose that 
  $(\chi_k)_{k\in\N} \subset [0,\infty)$, $(\ell_k)_{k\in\N} \subset [0,\infty)$,
  $(u_{0k})_{k\in\N} \subset W^{1,\infty}(\Om)$ and $(v_{0k})_{k\in\N} \subset W^{1,\infty}(\Om)$
  are such that $u_{0k}\ge 0$ and $v_{0k}>0$ in $\bom$ for all $k\in\N$, that
  as $k\to\infty$ we have
  \be{41.01}
	\chi_k\to \chi
	\qquad \mbox{and} \qquad
	\ell_k\to\ell
  \ee
  as well as
  \be{41.1}
	u_{0k} \to u_0
	\quad \mbox{in } L^\infty(\Om)
	\qquad \mbox{and} \qquad
	v_{0k} \to v_0
	\quad \mbox{in } W^{1,\infty}(\Om)
  \ee
  with some $\chi\ge 0$, $\ell\ge 0$, $u_0$ and $v_0$ fulfilling (\ref{i2}), 
  and that for each $k\in\N$ the problem (\ref{0}) with parameters $\chi_k$ and $\ell_k$ admits a global classical
  solution $(u_k,v_k)$ evolving from the initial data $(u_{0k},v_{0k})$, with
  \bas
	\lbal
	u_k\in C^0(\bom\times [0,\infty)) \cap C^{2,1}(\bom\times (0,\infty))
	\qquad \mbox{and} \\[1mm]
	v_k\in\bigcap_{q>n} C^0([0,\infty);W^{1,q}(\Om)) \cap C^{2,1}(\bom\times (0,\infty)).
	\ear
  \eas
  If the family of these solutions satisfies
  \be{41.2}
	\sup_{k\in\N} \|u_k\|_{L^\infty(\Om\times (0,\infty))} < \infty,
  \ee
  then there exists a unique locally bounded weak solution $(u,v)$ of (\ref{0}) in $\Om\times (0,\infty)$ 
  with initial data $(u_0,v_0)$, and we have
  \be{41.21}
	\lbal
	u\in C^0(\bom\times [0,\infty)) 
	\qquad \mbox{and} \\[1mm]
	v\in C^0(\bom\times [0,\infty)) \cap C^{2,1}(\bom\times (0,\infty))
	\ear
  \ee
  as well as
  \be{41.22}
	\|u\|_{L^\infty(\Om\times (0,\infty))}
	\le \sup_{k\in\N} \|u_k\|_{L^\infty(\Om\times (0,\infty))} 
  \ee
  and 
  \be{41.3}
	\|u_k-u\|_{L^\infty(\Om\times (0,\infty))}
	+ \|v_k-v\|_{L^\infty(\Om\times (0,\infty))}
	\to 0
	\qquad \mbox{as } k\to\infty.
  \ee
\end{lem}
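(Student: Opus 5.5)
The plan is to prove the convergence (\ref{41.3}) with the time variable rescaled: with respect to the new time, the whole half-line $t\in(0,\infty)$ becomes a bounded interval on which the equation is uniformly of porous medium type. Compactness on that bounded interval, combined with Theorem \ref{theo55} to identify the limit, then gives uniform convergence on all of $(0,\infty)$.

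\textbf{Step 1: uniform exponential decay of $v_k$.} Set $M:=\sup_k\|u_k\|_{L^\infty(\Om\times(0,\infty))}$. Then $f_k:=-u_k$ satisfies $\|f_k\|_{L^\infty}\le M$. By (\ref{41.1}), $\|\na v_{0k}/v_{0k}\|_{L^\infty(\Om)}$ is bounded for large $k$, because $v_0$ is continuous and positive on $\bom$. Hence Lemma \ref{lem_harnack} yields some $\lam>0$, independent of $k$, with $v_k(\cdot,t)\ge\lam\|v_k(\cdot,t)\|_{L^\infty(\Om)}$ throughout. Since $\ell_k\ge 0$, the mass $\io u_k$ is nondecreasing, so $\io u_k(\cdot,t)\ge\io u_{0k}\ge c_0>0$; this uses that $u_0\not\equiv 0$, as in the situations of Theorems \ref{theo33} and \ref{theo34}. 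Consequently
\[
\frac{d}{dt}\io v_k=-\io u_kv_k\le -\lam\|v_k\|_{L^\infty(\Om)}\io u_k\le -\frac{\lam c_0}{|\Om|}\io v_k .
\]
This gives $\io v_k(\cdot,t)\le Ce^{-\kappa t}$, and therefore $\|v_k(\cdot,t)\|_{L^\infty(\Om)}\le (\lam|\Om|)^{-1}\io v_k(\cdot,t)\le C'e^{-\kappa t}$, uniformly in $k$. In particular, $v_k$ is uniformly small for large times, and on each finite interval it is bounded below by a positive constant independent of $k$.

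\textbf{Step 2: time rescaling and equicontinuity.} Let $m_k(t):=\|v_k(\cdot,t)\|_{L^\infty(\Om)}$ and $s=S_k(t):=\int_0^t m_k$. By Step 1, $S_k(\infty)\le S^\star<\infty$ uniformly in $k$. In the variable $s$, the function $\tilde u_k(\cdot,s):=u_k(\cdot,S_k^{-1}(s))$ solves
\[
\tilde u_{k,s}=\na\cdot(a_k\tilde u_k\na\tilde u_k)-\chi_k\na\cdot(a_k\tilde u_k^2\na \tilde v_k)+\ell_ka_k\tilde u_k,
\qquad a_k:=\frac{v_k}{m_k}\in[\lam,1].
\]
Here $\na v_k$ is controlled through parabolic estimates for $v_k$, and $\na v_k/m_k$ should be uniformly bounded by a Harnack-type gradient bound. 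This is a porous medium type equation with uniformly elliptic structure. I would invoke DiBenedetto-type Hölder estimates for degenerate equations with bounded drift, applied on the bounded interval $s\in[0,S^\star]$, to obtain a uniform modulus of continuity for $\tilde u_k$ on $\bom\times[0,S^\star]$. Near $s=0$ this modulus comes from the uniform equicontinuity of $u_{0k}$, which holds because $u_{0k}\to u_0$ in $L^\infty$ and $u_0$ is continuous. Together with Step 1, the same approach gives a uniform modulus for $v_k$ on $\bom\times[0,\infty)$, where large times are handled trivially by decay. I expect this step to be the main obstacle: verifying the precise structural hypotheses needed for the degenerate Hölder theory, in particular the boundedness of the rescaled drift $\na v_k/m_k$ up to the boundary.

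\textbf{Step 3: limit, identification, uniqueness.} Since $S_k\to S$ uniformly (Step 1 plus convergence on finite intervals), Arzel\`a--Ascoli gives a subsequence along which $u_k\to u$ and $v_k\to v$ uniformly on $\bom\times[0,\infty)$. Passing to the limit in the weak formulations is straightforward given uniform convergence and bounds. For (\ref{b2}) I would test the $u_k$-equation by $u_k^2$ to get $\int_0^T\io u_kv_k|\na u_k^2|^2$-type bounds; combined with the lower bound on $v_k$ on $(0,T)$, this bounds $\na u_k^2$ in $L^2$, so the limit inherits (\ref{b2}). Condition (\ref{b3}) follows from the lower bound on $v_k$ on finite intervals, while (\ref{41.21}) and (\ref{41.22}) follow from uniform convergence and parabolic regularity for $v$. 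Theorem \ref{theo55} makes the limit unique, so the whole sequence converges, which is (\ref{41.3}).
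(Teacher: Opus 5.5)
Your proposal is correct and follows essentially the paper's route: the Harnack inequality and the nondecreasing mass of $u_k$ give uniform exponential decay of $\|v_k(\cdot,t)\|_{L^\infty(\Om)}$, and the time change $s=\int_0^t\|v_k(\cdot,t')\|_{L^\infty(\Om)}dt'$ maps $(0,\infty)$ onto a bounded interval on which the equation is uniformly of porous medium type, so the Porzio--Vespri modulus gives uniform convergence on all of $(0,\infty)$ and Theorem \ref{theo55} identifies the limit. The obstacle you anticipate is not actually there. In your own rescaled equation the drift coefficient is $a_k\na v_k=v_k\na v_k/m_k$, and since $a_k\le 1$ it is bounded by $\|\na v_k\|_{L^\infty}$ (this is the paper's $b_l$), so the uniform $W^{1,\infty}$ bound on $v_k$ from semigroup estimates suffices and no bound on $\na v_k/m_k$ is needed. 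Likewise, testing by $u_k^2$ yields $\frac12\int_0^T\io v_k|\na u_k^2|^2$ directly, without the extra factor $u_k$ you wrote, which is exactly what (\ref{b2}) requires.
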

\proof
  Using that (\ref{i2}), (\ref{41.1}) and (\ref{41.2}) ensure positivity of $c_1:=\inf_{k\in\N} \inf_{x\in\Om} v_{0k}(x)$
  and finiteness of $c_2:=\sup_{k\in \N} \|u_k\|_{L^\infty(\Om\times (0,\infty))}$, we may first employ a comparison principle to see 
  that since $\un{v}(x,t):=c_1 e^{-c_2 t}$, $(x,t)\in\bom\times [0,\infty)$, satisfies
  $\un{v}_t-\Del \un{v}+ u_k\un{v} = c_1\cdot (-c_2+u_k) \cdot e^{-c_2 t} \le 0$ in $\Om\times (0,\infty)$
  for all $k\in\N$, it follows that $v_k(x,t) \ge c_1 e^{-c_2 t}$ for all $x\in\Om, t>0$ and $k\in\N$, in particular meaning that
  \be{41.4}
	\inf_{k\in\N} \inf_{(x,t)\in\Om\times (0,T)} v_k(x,t) > 0
	\qquad \mbox{for all } T>0.
  \ee
  Since (\ref{41.2}) moreover entails that due to a standard argument based on heat semigroup estimates we have
  \be{41.43}
	\sup_{k\in\N} \sup_{t>0} \|v_k(\cdot,t)\|_{W^{1,\infty}(\Om)} < \infty,
  \ee
  and since (\ref{41.01}) and (\ref{41.1}) especially ensure boundedness of $(\chi_k)_{k\in\N}$ and $(\ell_k)_{k\in\N}$
  as well as equi-continuity of $(u_{0k})_{k\in\N}$, we may rely on a standard
  result on continuity in scalar parabolic equations with degeneracies of porous medium type 
  (\cite[Theorem 1.3 and Remark 1.4]{porzio_vespri}) to see that for each $T>0$, with some nondecreasing
  $\om_1: [0,\infty) \to [0,\infty)$ fulfilling $\om_1(\xi)\to 0$ as $\xi\to 0$,
  \bas
	|u_k(x_1,t_1)-u_k(x_2,t_2)|
	&\le& \om_1\big( |x_1-x_2| + |t_1-t_2|^\frac{1}{2}\big) \\
	& & \qquad \mbox{for all $(x_1,t_1)\in\Om\times (0,T), (x_2,t_2)\in\Om\times (0,T)$ and } k\in\N,
  \eas
  and that whenever $\tau>0$ and $T>\tau$, there exists $\theta_1=\theta_1(\tau,T)\in (0,1)$ such that
  \be{41.5}
	\sup_{k\in\N} \|u_k\|_{C^{\theta_1,\frac{\theta_1}{2}}(\bom\times [\tau,T])} < \infty.
  \ee
  Likewise, on the basis of (\ref{41.1}) and (\ref{41.2}) we can similarly conclude that given $T>0$ one can choose
  $\theta_2=\theta_2(T)\in (0,1)$ such that
  \be{41.6}
	\sup_{k\in\N} \|v_k\|_{C^{\theta_2,\frac{\theta_k}{2}}(\bom\times [0,T])} < \infty,
  \ee
  whereupon parabolic Schauder theory (\cite{LSU}) drawing on 
  the boundedness of $(\chi_k)_{k\in\N}$ and $(\ell_k)_{k\in\N}$ as well as on
  (\ref{41.5}) and (\ref{41.6}) implies that for each $\tau>0$ and any $T>\tau$
  there exists $\theta_3=\theta_3(\tau,T)\in (0,1)$ satisfying
  \be{41.7}
	\sup_{k\in\N} \|u_k\|_{C^{2+\theta_3,1+\frac{\theta_3}{2}}(\bom\times [\tau,T])} < \infty.
  \ee
  According to the compactness properties implied by (\ref{41.43})-(\ref{41.7}) through the Banach-Alaoglu and the
  Arzel\`a-Ascoli theorems, thanks to the uniqueness property asserted by Theorem \ref{theo55} the present lemma will thus
  be proved as soon as we have made sure that whenever $(k_l)_{l\in\N} \subset\N$,
  $u\in C^0(\bom\times [0,\infty))$ and 
  $v\in C^0(\bom\times [0,\infty)) \cap C^{2,1}(\bom\times (0,\infty)) \cap L^\infty((0,\infty);W^{1,\infty}(\Om))$
  are such that $k_l\nearrow\infty$ and
  \be{41.71}
	u_{k_l} \to u
	\qquad \mbox{in } C^0_{loc}(\bom\times [0,\infty))
  \ee
  as well as
  \be{41.72}
	v_{k_l} \to v
	\qquad \mbox{in } C^0_{loc}(\bom\times [0,\infty)) 
	\mbox{ and in } C^{2,1}_{loc}(\bom\times (0,\infty))
  \ee
  and
  \be{41.73}
	\na v_{k_l} \wsto \na v
	\qquad \mbox{in } L^\infty(\Om\times (0,\infty))
  \ee
  as $l\to\infty$, it follows that $(u,v)$ is a locally bounded weak solution of (\ref{0}) in $\Om\times (0,\infty)$
  with initial data $(u_0,v_0)$, which is such that
  \be{41.8}
	\|u_{k_l}-u\|_{L^\infty(\Om\times (0,\infty))}
	+ \|v_{k_l}-v\|_{L^\infty(\Om\times (0,\infty))}
	\to 0
	\qquad \mbox{as } l\to\infty.
  \ee
  To verify this, we once again rely on (\ref{41.2}) and (\ref{41.1}) in employing Lemma \ref{lem_harnack}, hence finding 
  $c_3>0$ such that
  \be{41.9}
	v_k(x,t)\ge c_3 \|v_k(\cdot,t)\|_{L^\infty(\Om)}
	\qquad \mbox{for all $x\in\Om, t>0$ and } k\in\N.
  \ee
  Since (\ref{41.1}) together with the hypothesis $u_0\not\equiv 0$ ensures the existence of $k_\star\in\N$, $c_4>0$ and $c_5>0$
  such that
  \be{41.10}
	\io u_{0k} \ge c_4
	\quad \mbox{and} \quad
	\io v_{0k} \le c_5
	\qquad \mbox{for all } k>k_\star,
  \ee
  and since thus
  \bas
	\io u_k(\cdot,t) \ge c_4
	\qquad \mbox{for all $t>0$ and } k>k_\star 
  \eas
  due to the fact that $\frac{d}{dt} \io u_k=\ell_k \io u_k v_k\ge 0$ for all $t>0$ and $k\in\N$ by the first equation in (\ref{0}),
  using (\ref{41.9}) along with the second equation in (\ref{0}) we obtain that
  \bas
	\frac{d}{dt} \io v_k
	&=& - \io u_k v_k
	\le c_3 \|v_k\|_{L^\infty(\Om)} \io u_k
	\le -c_3 c_4 \|v_k\|_{L^\infty(\Om)} \\
	&\le& - \frac{c_3 c_4}{|\Om|} \io v_k
	\qquad \mbox{for all $t>0$ and } k>k_\star.
  \eas
  Consequently, again by (\ref{41.9}), and by (\ref{41.10}), writing $c_6:=\frac{c_3 c_4}{|\Om|}$ we have
  \be{41.100}
	\|v_k\|_{L^\infty(\Om)}
	\le \frac{1}{c_3} \io v_k
	\le \frac{1}{c_3} \cdot \bigg\{ \io v_{0k} \bigg\} \cdot e^{-c_6 t}
	\le \frac{c_5}{c_3} e^{-c_6 t}
	\qquad \mbox{for all $t>0$ and } k>k_\star,
  \ee
  so that since (\ref{41.72}) warrants that $\|v_{k_l}(\cdot,t)\|_{L^\infty(\Om)} \to \|v(\cdot,t)\|_{L^\infty(\Om)}$
  for all $t>0$ as $l\to\infty$, by integrability of $(0,\infty) \ni t\mapsto \frac{c_5}{c_3} e^{-c_6 t}$ it follows
  from the dominated convergence theorem that $(0,\infty)\ni t\mapsto \|v(\cdot,t)\|_{L^\infty(\Om)}$ lies in $L^1((0,\infty))$
  with
  \bas
	J_l:=\int_0^\infty \|v_{k_l}(\cdot,t)\|_{L^\infty(\Om)} dt
	\to \int_0^\infty \|v(\cdot,t)\|_{L^\infty(\Om)} dt	=:J
	\qquad \mbox{as } l\to\infty,
  \eas
  and that for
  \bas
	\phi_l(t):=\frac{1}{J_l} \int_0^t \|v_{k_l}(\cdot,t')\|_{L^\infty(\Om)} dt',
	\qquad t\ge 0, \ l\in\N,
  \eas
  and
  \bas
	\phi(t):=\frac{1}{J} \int_0^t \|v(\cdot,t')\|_{L^\infty(\Om)} dt',
	\qquad t\ge 0,
  \eas
  we have
  \be{41.11}
	\phi_l\to \phi
	\quad \mbox{uniformly in } (0,\infty)
	\qquad \mbox{as } l\to\infty.
  \ee
  Since (\ref{41.4}) and (\ref{41.8}) guarantee that $v>0$ in $\bom\times [0,\infty)$, and that thus not only all the $\phi_l$ for 
  $l\in\N$ but also $\phi$ are strictly increasing elements of $C^1([0,\infty);[0,1))$ which for $t\ge 0$ satisfy
  \bas
	\phi_l'(t) = \frac{1}{J_l} \|v_{k_l}(\cdot,t)\|_{L^\infty(\Om)} > 0
	\quad \mbox{for all $l\in\N$}
	\qquad \mbox{and} \qquad
	\phi'(t)=\frac{1}{J} \|v(\cdot,t)\|_{L^\infty(\Om)} >0,
  \eas
  from (\ref{41.11}) and a well-known result from real analysis (\cite[Ch.~7]{rudin}) we obtain that
  \be{41.12}
	\phi_l^{-1}(s) \to \phi^{-1}(s)
	\quad \mbox{for all $s\in (0,1)$}
	\qquad \mbox{as } l\to\infty.
  \ee
  Writing
  \bas
	w_l(x,s):=u_{k_l}\big( x,\phi^{-1}(s)\big),
	\qquad x\in\bom, \ s\in [0,1), \ l\in\N,
  \eas
  by means of (\ref{41.8}) we therefore see that
  \be{41.13}
	w_l(x,s) \to w(x,s):=u\big(x,\phi^{-1}(s)\big)
	\qquad \mbox{for all $x\in \Om$ and $s\in (0,1)$}
	\qquad \mbox{as } l\to\infty.
  \ee
  Apart from that, based on (\ref{0}) we find that in the classical sense, for each $l\in\N$ we have
  \bas
	\lball
	w_{ls} = \frac{1}{2} \na\cdot \big(a_l(x,s)\na w_l^2\big)
	- \na \cdot \big( b_l(x,s) w_l^2\big)
	+ \ell_{k_l} a_l(x,s) w_l,
	\qquad & x\in\Om, \ s\in (0,1), \\[1mm]
	\big(\frac{1}{2} a_l(x,s) \na w_l^2 - b_l(x,s) w_l^2\big)\cdot\nu=0,
	\qquad & x\in\pO, \ s\in (0,1), \\[1mm]
	w_l(x,0)=u_{0 k_l}(x),
	\qquad & x\in\Om,
	\ear
  \eas
  where (\ref{41.9}), (\ref{41.43}) and the boundedness of $(\chi_k)_{k\in\N}$
  provide $c_7>0, c_8>0$ and $c_9>0$ such that if for $x\in\Om, s\in (0,1)$ and $l\in\N$ we let 
  \bas
	a_l(x,s):=J_l\cdot\frac{v_{k_l}(x,t)}{\|v_{k_l}(\cdot,t)\|_{L^\infty(\Om)}}
	\qquad \mbox{and} \qquad
	b_l(x,s):=\chi_{k_l} J_l \cdot \frac{v_{k_l}(x,t)\na v_{k_l}(x,t)}{\|v_{k_l}(\cdot,t)\|_{L^\infty(\Om)}}
	\qquad \mbox{with } t=\phi_l^{-1}(s),
  \eas
  then
  \bas
	c_7 \le a_l(x,s) \le c_8
	\quad \mbox{and} \quad
	|b_l(x,s)|\le c_9
	\qquad \mbox{for all $x\in\Om, \ s\in (0,1)$ and } l\in\N.
  \eas
  Thanks to the boundedness of $(\ell_k)_{k\in\N}$,
  we may therefore again invoke \cite[Theorem 1.3 and Remark 1.4]{porzio_vespri} to find a nondecreasing
  $\om_2: [0,\infty) \to [0,\infty)$ such that $\om_2(\xi)\to 0$ as $\xi\to 0$, and that
  \bea{41.14}
	\big| w_l(x_1,s_1)-w_l(x_2,s_2)\big|
	&\le& \om_2\big( |x_1-x_2| + |s_1-s_2|^\frac{1}{2}\big) \nn\\
	& & \hs{-4mm}
	\qquad \mbox{for all $(x_1,s_1)\in\Om\times (0,1), (x_2,s_2)\in\Om\times (0,1)$ and $l\in\N$.}
  \eea
  In consequence, the convergence in (\ref{41.13}) is actually uniform in $\Om\times (0,1)$, whence observing that
  \bas
	& & \hs{-20mm}
	\big| u_{k_l}(x,t)-u(x,t)\big| \\
	&=& \Big| \big( w_l\big(x,\phi(t)\big) - w\big(x,\phi(t)\big)\big) 
		+ \big( w_l\big(x,\phi_l(t)\big) - w_l\big(x,\phi(t)\big)\big) \Big| \\
	&\le& \|w_l-w\|_{L^\infty(\Om\times (0,1))}
	+ \om_2\big(|\phi_l(t)-\phi(t)|^\frac{1}{2}\big)
	\qquad \mbox{for all $(x,t)\in\Om\times (0,\infty)$ and $l\in\N$}  
  \eas
  by (\ref{41.14}), recalling (\ref{41.11}) we infer that
  \be{41.15}
	\|u_{k_l}-u\|_{L^\infty(\Om\times (0,\infty))} \to 0
	\qquad \mbox{as } l\to\infty.
  \ee
  Since from (\ref{41.100}) it follows that
  \bas
	\sup_{l,l'\in\N} \sup_{t>0} \|v_{k_l}(\cdot,t)-v_{k_{l'}}(\cdot,t)\|_{L^\infty(\Om)}
	\le \sup_{l,l'\in\N} \|v_{k_l} - v_{k_{l'}}\|_{L^\infty(\Om\times (0,t_0))}
	+ \frac{2c_5}{c_3} e^{-c_6 t_0}
	\qquad \mbox{for all } t_0>0,
  \eas
  by means of (\ref{41.72}) we furthermore find that
  \bas
	\|v_{k_l} - v\|_{L^\infty(\Om\times (0,\infty))} \to 0
	\qquad \mbox{as } l\to\infty,
  \eas
  which together with (\ref{41.15}) confirms (\ref{41.8}).\abs
  To finally verify that $(u,v)$ is a locally bounded weak solution of (\ref{0}) in $\Om\times (0,\infty)$ with initial
  data $(u_0,v_0)$, we note that according to (\ref{0}), for each $k\in\N$ and any $\vp\in C_0^\infty(\bom\times [0,\infty))$
  fulfilling $\frac{\pa\vp}{\pa\nu}=0$ on $\pO\times (0,\infty)$ we have
  \bas
	- \int_0^\infty \io u_k \vp_t 
	- \io u_{0k} \vp(\cdot,0)
	&=& \frac{1}{2} \int_0^\infty \io u_k^2 \na v_k \cdot\na\vp
	+ \frac{1}{2} \int_0^\infty \io u_k^2 v_k \Del\vp \nn\\
	& & + \chi_k \int_0^\infty \io u_k^2 v_k \na v_k\cdot\na\vp
	+ \ell_k \int_0^\infty \io u_k v_k\vp,
  \eas
  so that using (\ref{41.01}) now in its full strength, and utilizing
  (\ref{41.1}), (\ref{41.71}), (\ref{41.72}) and (\ref{41.73}) we obtain on letting $k=k_l\to\infty$ here that indeed
  (\ref{bu}) holds.
  As (\ref{bv}) can be derived in quite a similar manner, it remains to show that $\na u^2 \in L^2(\Om\times (0,T);\R^n)$
  for any $T>0$.
  To this end, for $k\in\N$
  we multiply the first equation in (\ref{0}) by $u_k^2$, integrate by parts and use Young's inequality to see that
  \bas
	\frac{1}{3} \frac{d}{dt} \io u_k^3
	+ 2\io u_k^2 v_k |\na u_k|^2
	&=& 2\chi_k \io u_k^3 v_k \na u_k\cdot\na v_k
	+ \ell_k \io u_k^3 v_k \nn\\
	&\le& \io u_k^2 v_k |\na u_k|^2
	+ \chi_k^2 \io u_k^4 v_k |\na v_k|^2
	+ \ell_k \io u_k^3 v_k.
  \eas
  Once again thanks to the boundedness of $(\chi_k)_{k\in\N}$ and $(\ell_k)_{k\in\N}$ as well as
  (\ref{41.2}) and (\ref{41.43}), we can thus find $c_{10}>0$ such that
  \bas
	\frac{1}{3} \frac{d}{dt} \io u_k^3
	+ \io u_k^2 v_k |\na u_k|^2
	\le c_{10}
	\qquad \mbox{for all $t>0$ and } k\in\N,
  \eas
  so that, in particular,
  \bas
	\int_0^T \io u_k^2 v_k |\na u_k|^2
	\le \frac{1}{3} \io u_{0k}^3 + c_{10} T
	\qquad \mbox{for all $T>0$ and } k\in\N.
  \eas
  Due to (\ref{41.4}), this ensures that for each $T>0$ there exists $c_{11}(T)>0$ such that
  \bas
	\int_0^T \io u_k^2 |\na u_k|^2
	\le c_{11}(T)
	\qquad \mbox{for all } k\in\N,
  \eas
  whence (\ref{41.71}) and the lower semicontinuity of the norm in $L^2(\Om\times (0,T))$ with respect to weak convergence
  imply that indeed $\na u^2$ belongs to $L^2(\Om\times (0,T);\R^n)$.
  We may accordingly conclude as intended.
\qed
By means of one more approximation argument involving regularized variants of the initial data in Theorem \ref{theo33},
we can use Lemma \ref{lem41} to achieve the announced goal of this section in the case when $n\le 2$ and $\Om$ is convex:\abs
\proofc of Theorem \ref{theo33}. \quad
  We fix $K>0$ and let $\Lam(K):=\Lam_1(K,K)$, with $\Lam_1(\cdot,\cdot)$ taken from Lemma \ref{lem333}.
  In order to simultaneously address both the existence statement and the claim concerning continuous dependence,
  we suppose that we are given $\chi$, $\ell$ and $(u_0,v_0)$ fulfilling (\ref{chi_ell}), (\ref{i2}) and (\ref{33.92}) 
  as well as numbers $\chi_k\ge 0$ and $\ell_k\ge 0$ and functions $u_{0k} \in C^0(\bom)$
  and $v_{0k} \in W^{1,\infty}(\Om)$, $k\in\N$, such that 
  $u_{0k}\not\equiv 0$ as well as $u_{0k}\ge 0$ and $v_{0k}>0$ in $\bom$ for all $k\in\N$, and that 
  (\ref{33.777}) and (\ref{33.2}) hold,
  where in view of (\ref{chi_ell}) and (\ref{33.92}) we may assume upon passing to a subsequence if necessary that
  \bas
	\chi_k \in [0,K)
	\quad \mbox{and} \quad
	\ell_k\in [0,K)
	\qquad \mbox{for all $k\in\N$,}
  \eas
  and that
  \bas
	\|u_{0k}\|_{L^\infty(\Om)} < K,
	\quad
	\io u_{0k}>\frac{1}{K},
	\quad
	\|v_{0k}\|_{L^\infty(\Om)} < K
	\quad \mbox{and} \quad
	\Big\|\frac{\na v_{0k}}{v_{0k}}\Big\|_{L^\infty(\Om)} < K
	\qquad \mbox{for all $k\in\N$.}
  \eas
  By density of $W^{1,\infty}(\Om)$ in $C^0(\bom)$, for fixed $k\in\N$ we can then 
  choose $(u_{0kl})_{l\in\N} \subset W^{1,\infty}(\Om)$
  such that $u_{0kl} \not\equiv 0$ and $u_{0kl}\ge 0$ in $\Om$ for all $l\in\N$, that still
  \bas
	\|u_{0kl}\|_{L^\infty(\Om)} < K,
	\quad
	\io u_{0kl}>\frac{1}{K},
	\quad
	\|v_{0k}\|_{L^\infty(\Om)} < K
	\quad \mbox{and} \quad
	\Big\|\frac{\na v_{0k}}{v_{0k}}\Big\|_{L^\infty(\Om)} < K
	\qquad \mbox{for all $l\in\N$,}
  \eas
  and that
  \bas
	\|u_{0kl} - u_{0k}\|_{L^\infty(\Om)} \le \eps(k,l):=\frac{1}{k+l}
	\qquad \mbox{for all } l\in\N,
  \eas
  so that
  \be{33.4}
	\big\| (u_{0kl}+\eps(k,l)) - u_{0k} \big\|_{L^\infty(\Om)}
	= \eps(k,l) + \|u_{0kl}-u_{0k}\|_{L^\infty(\Om)}
	\le \frac{2}{k+l}
	\qquad \mbox{for all } l\in\N,
  \ee
  whence, in particular, $\big\| (u_{0kl}+\eps(k,l)) - u_{0k} \big\|_{L^\infty(\Om)} \to 0$ as $l\to\infty$.
  Since $\eps(k,l)\in (0,1)$ and $u_{0kl} \in W^{1,\infty}(\Om)$ for all $l\in\N$, and since $n\le 2$ and $\Om$ is convex,
  given any $k\in\N$ and $l\in\N$ we may therefore rely on Lemma \ref{lem333} to find a global classical solution $(u_{kl},v_{kl})$
  of (\ref{0}) with parameters $\chi_k$ and $\ell_k$ and initial data $(u_{0kl}+\eps(k,l),v_{0k})$ which satisfies
  \be{33.5}
	\|u_{kl}(\cdot,t)\|_{L^\infty(\Om)}
	\le \Lam_1(K,K)=\Lam(K)
	\qquad \mbox{for all } t>0.
  \ee
  Based on (\ref{33.4}) and (\ref{33.5}), again for fixed $k\in\N$ we may employ Lemma \ref{lem41} to see that
  there exists a locally bounded weak solution
  $(u_k,v_k)\in C^0(\bom\times [0,\infty))\times \big( C^0(\bom\times [0,\infty)) \cap C^{2,1}(\bom\times (0,\infty))\big)$ 
  of (\ref{0}) in $\Om\times (0,\infty)$ with parameters $\chi_k$ and $\ell_k$ and emanating from
  the initial data $(u_{0k},v_{0k})$, and that for this solution we have
  \bas
	\|u_{kl}-u_k\|_{L^\infty(\Om\times (0,\infty))}
	+ \|v_{kl}-v_k\|_{L^\infty(\Om\times (0,\infty))}
	\to 0
	\qquad \mbox{as } l\to\infty,
  \eas
  meaning that to each $\iota>0$ there corresponds some $l_0(\iota,k)\in\N$ satisfying
  \be{33.6}
	\big\| u_{kl_0(\iota,k)} - u_k\|_{L^\infty(\Om\times (0,\infty))} 
	\le \frac{\iota}{2}.
  \ee
  We next combine (\ref{33.4}) with our hypothesis in (\ref{33.2}) to find that
  \bas
	& & \hs{-20mm}
	\sup_{l\in\N} \Big\{ \big\| (u_{0kl}+\eps(k,l)) - u_0\big\|_{L^\infty(\Om)} + \|v_{0k}-v_0\|_{W^{1,\infty}(\Om)} \Big\} \\
	&\le& \sup_{l\in\N} \Big\{ \frac{2}{k+l} + \|u_{0k}-u_0\|_{L^\infty(\Om)} + \|v_{0k}-v_0\|_{W^{1,\infty}(\Om)} \Big\} \\
	&\le& \frac{2}{k} + \|u_{0k}-u_0\|_{L^\infty(\Om)} + \|v_{0k}-v_0\|_{W^{1,\infty}(\Om)} \\[2mm]
	&\to& 0
	\qquad \mbox{as } k\to\infty,
  \eas
  so that a second application of Lemma \ref{lem41}, again relying on (\ref{33.5}), confirms the existence of a 
  locally bounded weak solution 
  $(u,v) \in C^0(\bom\times [0,\infty))\times \big( C^0(\bom\times [0,\infty)) \cap C^{2,1}(\bom\times (0,\infty))\big)$ 
  of (\ref{0}) in $\Om\times (0,\infty)$, now with parameters $\chi$ and $\ell$ and initial data $(u_0,v_0)$, which is such that
  \be{33.69}
	\big\| u_{kl_0(\iota,k)} - u\|_{L^\infty(\Om\times (0,\infty))}
	+ \|v_{kl_0(\iota,k)} - v\|_{L^\infty(\Om\times (0,\infty))}
	\to 0
	\qquad \mbox{as } k\to\infty,
  \ee
  and which in view of (\ref{33.5}) thus particularly satisfies (\ref{33.93}).
  Now thanks to (\ref{33.69}), we can fix $k_0(\iota)\in\N$ in such a way that
  \bas
	\big\| u_{kl_0(\iota,k)} - u\big\|_{L^\infty(\Om\times (0,\infty))}
	+ \big\|v_{kl_0(\iota,k)} - v\big\|_{L^\infty(\Om\times (0,\infty))}
	\le \frac{\iota}{2}
	\qquad \mbox{for all } k>k_0(\iota),
  \eas
  which in conjunction with (\ref{33.6}) implies that
  \bas
	\|u_k-u\|_{L^\infty(\Om\times (0,\infty))}
	+ \|v_k-v\|_{L^\infty(\Om\times (0,\infty))}
	\le \frac{\iota}{2}+\frac{\iota}{2}=\iota
	\qquad \mbox{for all } k>k_0(\iota).
  \eas
  As $\iota>0$ was arbitrary, this establishes (\ref{33.3}) and thereby proves the theorem.
\qed
Straighforward adaptation of this argument finally covers any smooth but not necessarily convex subdomain of $\R^n$ when $n\le 3$, 
provided that either $\chi$ and $\ell$ or $v_0$ are adequately small:\abs
\proofc of Theorem \ref{theo34}. \quad
  We this time let $\del(K,\LL):=\del_1(K,\LL)$ and $\Lam(K,\LL):=\Lam_1(K,\LL)$, with $\del_1(\cdot,\cdot)$ and $\Lam_1(\cdot,\cdot)$
  as provided by Lemma \ref{lem333}.
  Noting that (\ref{34.1}) and (\ref{34.2}) ensure the validity of
  (\ref{333.2}) and (\ref{333.3}) for all sufficiently large $k\in\N$, we can derive the statement of this theorem from
  Lemma \ref{lem333} and Lemma \ref{lem41} in a way very similar to that seen in the proof of Theorem \ref{theo33}.
\qed

\bigskip

{\bf Acknowledgement.} \quad
  The author acknowledges support of the {\em Deutsche Forschungsgemeinschaft} (Project No.~444955436).
  He moreover declares that he has no relevant financial or non-financial interests to disclose.\abs
{\bf Data availability statement.} \quad
  Data sharing is not applicable to this article as no datasets were generated or analyzed during the current study.\abs
{\bf AI statement.} \quad
  No AI tools have been used during preparation of this manuscript.

{\small
}

\end{document}